\documentclass{amsart}
\usepackage{latexsym,amsxtra,amscd,ifthen}
\usepackage{amsfonts}
\usepackage{verbatim}
\usepackage{amsmath}
\usepackage{amsthm}
\usepackage{amssymb}

\theoremstyle{plain}

\newtheorem*{prop}{Proposition}

\newtheorem*{thrm}{Theorem}
\newtheorem*{cor}{Corollary}
\newtheorem*{corollary}{Corollary}

\newtheorem*{lemma}{Lemma}

 \theoremstyle{remark}
 
\newtheorem*{ex}{Example}

\newtheorem*{defn}{Definition}

\newtheorem*{rem}{Remark}

\newtheorem*{rems}{Remarks}

\theoremstyle{definition}

 \DeclareMathOperator{\id}{id}

 \DeclareMathOperator{\ad}{ad}
  
\DeclareMathOperator{\Alg}{Alg}

%%%%%%%%%%%%%%%%%%%%%%%%%%%%%%%%%%%%%%%%%%%%%%%%%%%%%%%%%%%%%%%
\begin{document}

\title[Ambiskew Hopf algebras]{Ambiskew Hopf algebras }
%\maketitle

\author{K.A. Brown and M. Macauley}

\address{Brown: Department of Mathematics,
University of Glasgow, Glasgow G12 8QW, UK}

\email{Ken.Brown@glasgow.ac.uk}

\maketitle
\begin{abstract} Necessary and sufficient conditions are obtained for
an ambiskew polynomial algebra $A$ over a Hopf $k$-algebra $R$ to
possess the structure of a Hopf algebra extending that of $R$, in
which the added variables $X_+$ and $X_-$ are skew primitive. The
coradical filtration is calculated, many examples are
described, and properties determined.
\end{abstract}

\section{Introduction} \subsection{Ambiskew algebras} \label{ambiskew}
The ingredients to define an \emph{ambiskew polynomial algebra}
$A$ are a field $k$, arbitrary except where otherwise stated; a
$k$-algebra $R$, not necessarily commutative; a $k$-algebra
automorphism $\sigma$ of $R$; a central element $h$ of $R$; and a
nonzero element $\xi$ of $k$. Define $A = A(R, X_+, X_-, \sigma,
h, \xi)$ to be the $k$-algebra generated by $R$ and indeterminates
$X_+$ and $X_-$, subject to the relations
\begin{align} X_{+}r &= \sigma(r)X_{+} \,; \label{plus} \\   X_{-}r &=  \sigma^{-1}(r)X_{-} \; ; \label{plusplus}
 \\ X_+X_- &= h + \xi X_-X_+ \;, \label{minus} \end{align}
for all $r \in R$. These algebras were given this name in
\cite{Jor00} (for the case of commutative $R$), although they had appeared previously in the
literature - for example, in \cite{JoW96}. As explained in \cite{Jor00}, they are closely
related to down-up algebras \cite{BeR98} and to generalised Weyl
algebras \cite{Bav92}.

Let $R$ and $A$ be as above, and suppose that $R$ is a Hopf
$k$-algebra. Our main theorem gives necessary and sufficient
conditions for $A$ to be a Hopf algebra with $R$ as Hopf
subalgebra and such that $X_+$ and $X_-$ are \emph{skew primitive}
- that is,
\begin{equation} \label{deltaxpm1} \Delta(X_{\pm}) = X_{\pm}
\otimes r_{\pm} + l_{\pm} \otimes X_{\pm},\end{equation} for some
$l_{\pm}, r_{\pm} \in R$. Before stating the result we need to fix
some more notation.

\subsection{Notation} \label{notation} Our standard reference for Hopf algebras will be
\cite{Mo}. Let $H$ be a Hopf $k$-algebra. The comultiplication, counit and antipode maps are denoted
$\Delta$, $\varepsilon$ and $S$, respectively. Unadorned tensor products
are understood to be over $k$. For $h \in H$, we
write the coproduct $\Delta (h)$ as $\sum h_1 \otimes h_2.$ We
shall assume always that $S$ is bijective. The group of grouplike
elements of $H$ is denoted $G(H)$, and, for $g,w \in G(H)$ we
write $P_{g,w}(H)$ for the $g,w$-primitive elements of $H$, that
is the vector space of elements $h$ of $H$ for which $\Delta (h) =
h \otimes g + w \otimes h.$ Given a character $\chi$ of $H$, (that
is, a $k$-algebra homomorphism from $H$ to $k$), the \emph{right
winding automorphism} $\tau^r_{\chi}$ of $H$ is defined by
$\tau^r_{\chi}(h) = \sum h_1 \chi(h_2)$ for $h \in H$; and
$\tau^l_{\chi} $ denotes the corresponding \emph{left} winding
automorphism, \cite[I.9.25]{BG2}. The \emph{left adjoint action} of $y
\in H$ on $H$ is given by $\mathrm{ad}_l (y)(h) = \sum y_1 h
S(y_2)$ for $h \in H.$ The centre of a ring or group $X$ is
denoted by $Z(X)$.

\subsection{Main theorem} \label{main} First, one notes (Lemma \ref{relabel}) that,
if $A$ is an ambiskew Hopf $k$-algebra satisfying
\eqref{deltaxpm1}, then one can change the ambiskew variables in
$A$ to new ones, which we continue to denote $X_{\pm}$, for which
\begin{equation} \label{hatform2} \Delta (X_{\pm}) =
X_{\pm} \otimes 1 + y_{\pm} \otimes X_{\pm},\end{equation} for
$y_+,y_- \in G(R).$ Using this choice of variables makes the
statement of the theorem somewhat easier:
\begin{thrm} The ingredients which are necessary and sufficient to
construct an ambiskew Hopf algebra $A = A(R,X_+,X_-,\sigma,h,\xi)$
over the Hopf algebra $R$, with ambiskew variables $X_{\pm}$
satisfying \eqref{hatform2}, are as follows:
\begin{itemize} \item a character $\chi$ of $R$;
\item an element $z \in Z(R) \cap G(R)$; \item an element $h \in
P_{1,z}(R) \cap Z(R)$; \item elements $y_+, y_- \in Z(G(R))$ with
$z = y_+ y_-$ and $\chi (y_+) = \chi (y_-) =: \xi$; \item defining
$\sigma := \tau^l_{\chi}$, it is required that
$$ \sigma \quad = \quad \mathrm{ad}_l (y_+) \circ \tau^r_{\chi}.$$
\end{itemize}
\end{thrm}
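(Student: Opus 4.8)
The plan is to establish necessity and sufficiency separately, in both directions using the single engine of applying $\Delta$, $\varepsilon$ and $S$ to the three defining relations \eqref{plus}, \eqref{plusplus}, \eqref{minus} and reading off the resulting identities in a PBW-type basis, in which elements of $A$ (and of $A\otimes A$) are written as $R$-combinations of ordered monomials in $X_+,X_-$. By Lemma~\ref{relabel} we may assume throughout that the coproducts have the normalized form \eqref{hatform2}, so that $X_\pm\in P_{1,y_\pm}(A)$ with $y_\pm\in G(R)$; this already forces $\varepsilon(X_\pm)=0$ and $S(X_\pm)=-y_\pm^{-1}X_\pm$.

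For necessity, I would first feed \eqref{plus} through $\Delta$. Writing $\Delta(r)=\sum r_1\otimes r_2$ and using \eqref{plus} to move $X_+$ past $R$, the equation $\Delta(X_+r)=\Delta(\sigma(r)X_+)$ splits, on comparing the coefficient of $X_+$ in each tensor slot, into the two identities $\sum\sigma(r_1)\otimes r_2=\Delta(\sigma(r))$ and $\sum y_+r_1\otimes\sigma(r_2)=\sum\sigma(r)_1y_+\otimes\sigma(r)_2$. Applying $\varepsilon\otimes\mathrm{id}$ to the first shows that $\chi:=\varepsilon\circ\sigma$ is a character and that $\sigma=\tau^l_\chi$; applying $\mathrm{id}\otimes\varepsilon$ to the second gives $\sigma=\mathrm{ad}_l(y_+)\circ\tau^r_\chi$. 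The same analysis of \eqref{plusplus} yields the mirror identities $\sigma^{-1}=\tau^l_{\bar\chi}=\mathrm{ad}_l(y_-)\circ\tau^r_{\bar\chi}$, where $\bar\chi=\chi\circ S$ is the convolution inverse. Evaluating $\sigma$ on a grouplike $g$ in the two ways $\sigma(g)=\chi(g)g$ and $\sigma(g)=\chi(g)\,y_+gy_+^{-1}$ forces $y_+\in Z(G(R))$, and symmetrically $y_-\in Z(G(R))$.

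The centrality of $z:=y_+y_-$ then comes out by combining the two compatibility identities: from $\mathrm{ad}_l(y_+)=\tau^l_\chi\circ\tau^r_{\bar\chi}$ and $\mathrm{ad}_l(y_-)=\tau^l_{\bar\chi}\circ\tau^r_\chi$, the commutativity of left and right winding automorphisms gives $\mathrm{ad}_l(z)=\mathrm{ad}_l(y_+)\circ\mathrm{ad}_l(y_-)=\mathrm{id}$, whence $z\in Z(R)\cap G(R)$. Finally I would run \eqref{minus} through $\Delta$ and reduce $X_+X_-$ by the relation: the $R\otimes R$ component gives $\Delta(h)=h\otimes1+z\otimes h$, i.e. $h\in P_{1,z}(R)$ (while $h\in Z(R)$ is part of the ambiskew data); the $X_+\otimes X_-$ and $X_-\otimes X_+$ components give $\sigma(y_\pm)=\xi y_\pm$, equivalently $\chi(y_+)=\chi(y_-)=\xi$; and the $R\otimes X_-X_+$ component gives $y_+y_-=y_-y_+$. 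This recovers the full list of conditions.

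For sufficiency I would start from the data, set $\sigma:=\tau^l_\chi$ and $\xi:=\chi(y_+)$, and note that $A=A(R,X_+,X_-,\sigma,h,\xi)$ is a bona fide ambiskew algebra, since $\tau^l_\chi$ is an automorphism (inverse $\tau^l_{\bar\chi}$), $h$ is central and $\xi=\chi(y_+)\neq0$. Using the presentation of $A$ by $R$ and $X_\pm$, it suffices to check that $\Delta(X_\pm)=X_\pm\otimes1+y_\pm\otimes X_\pm$, $\varepsilon(X_\pm)=0$ and $S(X_\pm)=-y_\pm^{-1}X_\pm$ respect \eqref{plus}--\eqref{minus}; these are the necessity computations run backwards, and they close up precisely because the hypotheses are the identities produced there. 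I expect the main obstacle to be exactly the point where the $\varepsilon$-collapsed scalar hypothesis $\sigma=\mathrm{ad}_l(y_+)\circ\tau^r_\chi$ must be promoted to the full tensor identity $\sum y_+r_1\otimes\sigma(r_2)=\sum\sigma(r)_1y_+\otimes\sigma(r)_2$: writing $L_{y_+},R_{y_+}$ for left and right multiplication by $y_+$, both sides equal $(L_{y_+}\tau^r_\chi\otimes\mathrm{id})\Delta(r)$ and $(R_{y_+}\tau^l_\chi\otimes\mathrm{id})\Delta(r)$, so the identity reduces to $L_{y_+}\circ\tau^r_\chi=R_{y_+}\circ\tau^l_\chi$, which is the hypothesis itself. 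Relation \eqref{plusplus} then follows after deducing $\sigma^{-1}=\mathrm{ad}_l(y_-)\circ\tau^r_{\bar\chi}$ from $z=y_+y_-\in Z(R)$, and \eqref{minus} balances using $\Delta(h)=h\otimes1+z\otimes h$ together with $\sigma(y_\pm)=\xi y_\pm$; it then remains only to verify coassociativity, the counit and antipode axioms, and bijectivity of $S$, all of which need checking only on the generators $R$ and $X_\pm$ and are routine.
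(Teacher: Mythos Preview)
Your proposal is correct and follows essentially the same approach as the paper: the necessity direction is exactly the content of Lemma~\ref{change} (feeding the three relations through $\Delta$ and reading off identities in the free $R\otimes R$-basis \eqref{aabasis}), and the paper's sufficiency direction is simply declared a ``routine (but fairly long) series of calculations'', for which you supply a useful outline. Your derivation of $z\in Z(R)$ via $\ad_l(z)=\ad_l(y_+)\ad_l(y_-)=(\tau^l_\chi\tau^r_{\bar\chi})(\tau^l_{\bar\chi}\tau^r_\chi)=\id$ is a slight variant of the paper's argument, which instead compares the two expressions $\sigma=\ad_l(y_+)\circ\tau^r_\chi$ and $\sigma=\ad_r(y_-)\circ\tau^r_\chi$ coming from \eqref{plus} and \eqref{plusplus} respectively to get $\ad_l(y_+)=\ad_r(y_-)$ directly; both routes are valid.
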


This is Theorem \ref{newhopf}, with the rest
of $\S 2$ containing the proof. The case where $R$ is affine
commutative over the algebraically closed field $k$ was obtained
previously as the main result of \cite{Har08}. Even in the
commutative case, however, we believe that the formulation given
here makes it easier to determine the possible ambiskew extensions
of a given Hopf algebra $R$.

\subsection{Consequences and comments} \label{comments}
(i) \textbf{Special cases.} The specialisations of the main
theorem to the cases where $R$ is commutative or cocommutative are
stated respectively as Propositions 3.2 and 3.3. As
mentioned above, Proposition 3.2 is a result of Hartwig.

(ii) \textbf{Examples.} As shown in Proposition 3.1, it is an easy
consequence of the main theorem that, when $A$ is an ambiskew Hopf
algebra as above, then \emph{either $\xi = \pm 1$ or $h$ is a
scalar multiple of $z - 1$}. This considerably helps the
cataloguing of ambiskew Hopf extensions.

In \eqref{poly} all the possibilities for $A$ are listed when $R =
k[t]$, the polynomial algebra in a single variable. One gets the
enveloping algebra of $\mathfrak{sl}(2, k)$, the enveloping
algebra of the 3-dimensional Heisenberg Lie algebra, and a
1-parameter family of enveloping algebras of 3-dimensional
solvable Lie algebras.

In \eqref{Laurent}, the case $R = k[t^{\pm 1}]$ is described: the
basic example here is the quantised enveloping algebra
 $A = U_q (\mathfrak{sl}(2,k)).$ However there are a number of
 variants and degenerate cases to be considered.

In \eqref{qsl} we consider the case $R = U_q
(\mathfrak{sl}(2,k)).$ Examples occur where
$y_{\pm}$ are not central in $R$, and interesting, apparently new,
Hopf algebras are obtained as ambiskew extensions of $U_q
(\mathfrak{sl}(2,k)).$

(iii) \textbf{Coradical filtration.} For $R$ and $A$ as in the
main theorem, but assuming now that $k$ has characteristic 0, the
coradical filtration of $A$ is described in Theorem
\ref{coradthrm}. As special cases we recover the results for $A =
U_q (\mathfrak{sl}(2))$ of \cite{Mo2} and \cite{B}.

(iv) \textbf{Properties of ambiskew Hopf algebras.} In $\S 6$ a number of properties of ambiskew algebras are examined - ring-theoretic properties in Theorem \ref{ring}, homological properties in Theorem \ref{homol} and Hopf-theoretic properties in Theorem \ref{hoppy}. The properties covered include primeness and semiprimeness, and when the algebra is PI, AS-Gorenstein and AS-regular. Values for the PI-degree, the GK-dimension and the global and injective dimensions are given.

\section{Hopf algebra structure}

\subsection{Iterated skew structure}\label{iterate}

Recall that, given a $k$-algebra $T$ and algebra automorphisms
$\alpha$ and $\beta$ of $T$, an $(\alpha,\beta)$-derivation
$\delta$ of $T$ is a $k$-linear endomorphism of $T$ such that
$\delta(ab) = \alpha(a)\delta(b) + \delta(a)\beta(b)$ for all $a,b
\in T$.

Suppose here and throughout $\S$2 that $R$ and $A$ are as in
(\ref{ambiskew}). Recall the definition of a skew polynomial
algebra from \cite[1.2.1]{MR}. Then $A$ has the structure of an
iterated skew polynomial ring over $R$, as follows. The subalgebra
of $A$ generated by $R$ and $X_+$ is just the skew polynomial ring
$R[X_+; \sigma]$. We can extend $\sigma$ to an automorphism of
$R[X_{+}; \sigma]$, which we denote by the same symbol, by
defining $\sigma(X_+) = {\xi}X_+$. Define the $(
\sigma^{-1},\id)$-derivation $\delta \colon R[X_+;\sigma]
\rightarrow R[X_+; \sigma]$ by $\delta(R) = 0$, $\delta(X_+) =
-\xi^{-1}h$. One checks routinely that this is well-defined. Then
\[ A \cong R[X_+; \sigma][X_-; \sigma^{-1}, \delta].\]
Consequently, $A$ is a free left and right $R$-module, with basis
\begin{equation} \label{abasis} \{ {X_+}^m {X_-}^n \colon m, n \geq
0\}.\end{equation} In addition, $A \otimes A$ is a free left and
right $R \otimes R$-module, with basis \begin{equation}
\label{aabasis} \{ {X_-}^m {X_+}^n \otimes {X_-}^p{X_+}^q \colon
m,n,p,q \geq 0\}.
\end{equation}

\subsection{The key lemma.}
%MANUAL REFERENCING FOR THIS LEMMA
\begin{lemma} \label{change} Let $R$ and $A$ be as in (\ref{ambiskew}). \begin{enumerate} \item \label{bi} Suppose $R$ is equipped with a bialgebra structure, which
 extends to a bialgebra structure on $A:=(A,m,u,\Delta,\varepsilon)$ such
that \eqref{deltaxpm1} holds. Then
\begin{enumerate}
%\item  $\varepsilon( X_{\pm}) = 0$,
\item $\Delta(r_{\pm}) = r_{\pm} \otimes r_{\pm}$ and
$\Delta(l_{\pm}) = l_{\pm} \otimes l_{\pm}$; \item  $r_+r_- =
r_-r_+$ and $l_+l_-=l_-l_+ ;$ \item $\Delta(h) = h \otimes r_+r_-
+ l_+l_- \otimes h.$
\end{enumerate} \item \label{ho} Suppose $R$ is equipped with a Hopf algebra structure, which
 extends to a Hopf algebra structure on $A:=(A,m,u,\Delta,\varepsilon,S)$ such
that \eqref{deltaxpm1} holds. Define the character $\chi :=
\varepsilon \circ \sigma \colon R \rightarrow k$. Then
\begin{enumerate}
\item \label{group} $r_{\pm}$ and $l_{\pm}$ are grouplike, and
$r_+r_-$ and $l_+l_-$ are in $Z(R)$.

\item\label{windup} $\sigma = \ad_l(l_+) \circ \tau^r_{\chi} =
\ad_l(r_{+}) \circ \tau^l_{\chi}$.

\item \label{windup2} $\sigma = \ad_r(l_-) \circ \tau^r_{\chi} =
\ad_r(r_{-}) \circ \tau^l_{\chi}$.

\item \label{cent} $B := \langle r_+, r_-, l_+,l_- \rangle $ is an
abelian subgroup of $G(R)$.

\item \label{sigho} For all $g \in B$, $\sigma (g) = \chi(g)g$.

\item $\xi =  \chi (l_- r_+ ) = \chi (l_+ r_-).$
\end{enumerate}
\end{enumerate}
\end{lemma}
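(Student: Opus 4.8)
The whole lemma is extracted by applying the comultiplication, counit, antipode and coassociativity to the defining relations \eqref{plus}--\eqref{minus} together with \eqref{deltaxpm1}, and then reading off coefficients against the free $R$-module bases \eqref{abasis} and \eqref{aabasis}. The plan is thus computational, organised so that each clause follows from one such application. For part (1) I would first impose coassociativity on \eqref{deltaxpm1}: expanding $(\Delta\otimes\id)\Delta(X_\pm)$ and $(\id\otimes\Delta)\Delta(X_\pm)$ and cancelling the common term $l_\pm\otimes X_\pm\otimes r_\pm$, comparison of the coefficients carrying a single $X_\pm$ forces $\Delta(r_\pm)=r_\pm\otimes r_\pm$ and $\Delta(l_\pm)=l_\pm\otimes l_\pm$; the counit axiom applied to \eqref{deltaxpm1} then yields $\varepsilon(r_\pm)=\varepsilon(l_\pm)=1$ (and incidentally $\varepsilon(X_\pm)=0$), so these four elements are grouplike, proving (a).

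For (b) and (c) I would apply the algebra homomorphism $\Delta$ to \eqref{minus}, compute $\Delta(X_+)\Delta(X_-)$ and $\Delta(X_-)\Delta(X_+)$ using \eqref{plus}--\eqref{minus} to move the $X_\pm$ past the group elements, and expand everything in the basis \eqref{aabasis}. Matching the coefficients of $X_-X_+\otimes 1$ and of $1\otimes X_-X_+$ gives $r_+r_-=r_-r_+$ and $l_+l_-=l_-l_+$; matching $1\otimes 1$ gives (c); and matching $X_+\otimes X_-$ and $X_-\otimes X_+$ produces the two auxiliary identities $\sigma(l_-)\otimes r_+=\xi\,l_-\otimes\sigma^{-1}(r_+)$ and $l_+\otimes\sigma(r_-)=\xi\,\sigma^{-1}(l_+)\otimes r_-$ in $R\otimes R$, which I keep for part (2).

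For part (2) the elements $r_\pm,l_\pm$ are now grouplike, hence invertible with $S(g)=g^{-1}$. Applying $\Delta$ to \eqref{plus}, expanding in the basis, and applying $\id\otimes\varepsilon$ and $\varepsilon\otimes\id$ to the coefficients of $1\otimes X_+$ and $X_+\otimes 1$ gives, on recognising $\varepsilon\circ\sigma=\chi$ and the winding maps, exactly the identities of (b): $\sigma=\ad_l(l_+)\circ\tau^r_\chi=\ad_l(r_+)\circ\tau^l_\chi$. The identical computation on \eqref{plusplus} produces the companion formulas $\sigma^{-1}=\ad_l(l_-)\circ\tau^r_{\chi'}=\ad_l(r_-)\circ\tau^l_{\chi'}$, where $\chi':=\varepsilon\circ\sigma^{-1}$. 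Substituting $l_+,r_+$ into the two forms of (b) gives $\sigma(l_+)=\chi(l_+)l_+$ and $\sigma(r_+)=\chi(r_+)r_+$, while the rank-one decomposition of the two auxiliary identities of part (1) (the scalar being pinned down by applying $\varepsilon$) gives $\sigma(l_-)=\chi(l_-)l_-$ and $\sigma(r_-)=\chi(r_-)r_-$; multiplicativity of $\sigma$ and $\chi$ then extends $\sigma(g)=\chi(g)g$ to all of $B:=\langle r_\pm,l_\pm\rangle$, which is (e). Reading the same auxiliary identities through $\varepsilon\otimes\id$ yields $\sigma^{-1}(r_+)=\xi^{-1}\chi(l_-)r_+$ and $\sigma^{-1}(l_+)=\xi^{-1}\chi(r_-)l_+$; comparison with the eigenvalues just found forces $\xi=\chi(l_-r_+)=\chi(l_+r_-)$, which is (f). For (d), comparing $\sigma(g)=\chi(g)g$ with the $\ad_l(l_+)$- and $\ad_l(r_+)$-forms of (b) shows $l_+,r_+$ centralise $B$, and comparison with the two companion $\sigma^{-1}$-formulas shows the same for $l_-,r_-$; hence $B$ is abelian.

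The one genuinely non-formal point, and the step I expect to be the main obstacle, is the centrality assertion in (a) of part (2), from which (c) then follows. My plan is to compose the two families of formulas: $\id=\sigma\circ\sigma^{-1}=\ad_l(l_+)\circ\tau^r_\chi\circ\ad_l(l_-)\circ\tau^r_{\chi'}$. The key subsidiary fact is that a character is invariant under conjugation by a grouplike, so that each $\tau^r_\psi$ commutes with every $\ad_l(g)$ and $\ad_r(g)$; using this and the composition law $\tau^r_\chi\circ\tau^r_{\chi'}=\tau^r_{\chi*\chi'}$ collapses the right-hand side to $\ad_l(l_+l_-)\circ\tau^r_{\chi*\chi'}$. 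Applying $\varepsilon$ to $\ad_l(l_+l_-)\circ\tau^r_{\chi*\chi'}=\id$, and using $\varepsilon\circ\ad_l(g)=\varepsilon$, shows $\chi*\chi'=\varepsilon$; hence $\tau^r_{\chi*\chi'}=\id$ and therefore $\ad_l(l_+l_-)=\id$, i.e.\ $l_+l_-\in Z(R)$, with the $\tau^l$-versions giving $r_+r_-\in Z(R)$. Finally, once $l_+l_-=l_-l_+$ and $r_+r_-=r_-r_+$ are central, conjugation by $l_-^{-1}$ agrees with conjugation by $l_+$ and conjugation by $r_-^{-1}$ with conjugation by $r_+$, so $\ad_r(l_-)=\ad_l(l_+)$ and $\ad_r(r_-)=\ad_l(r_+)$ on $R$; substituting into (b) delivers (c).
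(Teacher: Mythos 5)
Your proof is correct, and its first half --- part (1), the grouplike/invertibility statement in (2)(a), and (2)(b) --- is exactly the paper's argument: counit and coassociativity plus freeness over the bases \eqref{abasis} and \eqref{aabasis}, then reading the $\Delta$-preservation of \eqref{plus} through $m(\id\otimes\varepsilon)$ and $m(\varepsilon\otimes\id)$. Where you genuinely diverge is in the treatment of \eqref{plusplus}, the centrality claim, and the order of (d),(e). The paper rewrites \eqref{plusplus} in the form $rX_-=X_-\sigma(r)$ and runs the identical freeness computation to obtain (c) directly, $\sigma=\ad_r(l_-)\circ\tau^r_\chi=\ad_r(r_-)\circ\tau^l_\chi$; centrality of $l_+l_-$ and $r_+r_-$ is then a one-line consequence: equating the expressions in (b) and (c) and cancelling the bijective winding automorphism gives $\ad_l(l_+)=\ad_r(l_-)$ and $\ad_l(r_+)=\ad_r(r_-)$, which together with part (1)(b) is exactly the statement that $l_+l_-,\,r_+r_-\in Z(R)$. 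You instead take \eqref{plusplus} as written, which naturally produces formulas for $\sigma^{-1}$ rather than for $\sigma$, and you reach centrality by composing these with (b); this works, and the two auxiliary facts you invoke (winding automorphisms commute with $\ad_l$ of a grouplike, since a character is invariant under conjugation by a grouplike, and winding automorphisms compose by convolution of characters) are both true and standard --- but they are extra machinery the paper does not need. Your subsequent recovery of (c) from (b) plus centrality, via $\ad_r(l_-)=\ad_l(l_+)$ once $l_-l_+\in Z(R)$, is also correct; indeed your $\sigma^{-1}$-formulas are precisely the inverses of the paper's (c), so the two routes are mathematically equivalent. Finally, you prove (e) before (d) (eigenvalue statements for $l_+,r_+$ from (b), for $l_-,r_-$ from the rank-one analysis of \eqref{deltaminusiff3} and \eqref{deltaminusiff4} with the scalar pinned down by $\varepsilon$, then multiplicativity, with abelianness deduced afterwards), whereas the paper derives (d) first from its four conjugation expressions for $\sigma$ and gets (e) as a consequence; both orders are sound and non-circular. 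The overall trade-off: the paper's route is tighter and stays entirely inside the freeness bookkeeping, while yours costs a commutation lemma and some convolution algebra but avoids having to notice the right way to rewrite \eqref{plusplus}, and it makes the underlying group-theoretic mechanics explicit.
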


\begin{proof}
%We suppose that $\Delta \colon A \rightarrow A \otimes A$ and
%$\varepsilon \colon A \rightarrow k$ are algebra homomorphisms
%extending, respectively, the coproduct and counit on $R$. We show
%that $\Delta$ preserves \eqref{plus} and \eqref{minus}, and
%$\Delta$ and $\varepsilon$ satisfy, respectively, the
%coassociativity and counit axioms if and only if certain
%conditions hold. This means that we prove slightly more than is
%required, but for convenience later we will refer back to some of
%these calculations.

(i) Suppose that $A$ has a bialgebra structure extending that on
$R$. (a) The counit condition for $A$ implies that
\begin{align*} m( \id \otimes \varepsilon) \Delta(X_{\pm}) &= X_{\pm}
\varepsilon(r_{\pm}) + l_{\pm} \varepsilon (X_{\pm}) = X_{\pm}, \\
m(\varepsilon \otimes \id) \Delta(X_{\pm}) &=
\varepsilon(X_{\pm})r_{\pm} + \varepsilon(l_{\pm})X_{\pm} = X_{\pm}.
\end{align*} Rearranging these gives $(1 -
\varepsilon(r_{\pm}))X_{\pm} = l_{\pm} \varepsilon(X_{\pm})$ and
$(1 - \varepsilon(l_{\pm}))X_{\pm} = \varepsilon(X_{\pm}) r_{\pm}
.$ Since $A$ is a free $R$-module with basis
\eqref{abasis}, both sides must be zero, so $\varepsilon$ satisfies
the counit condition if and only if
\begin{equation} \label{counitiff} \varepsilon(X_{\pm}) = 0 \enspace
\textrm{and} \enspace \varepsilon(l_{\pm}) = \varepsilon(r_{\pm})
= 1. \end{equation} The coassociativity condition for $\Delta$
yields
\[(\id \otimes \Delta) \Delta(X_{\pm}) = (\Delta \otimes \id)
\Delta (X_{\pm});\] that is, \[X_{\pm} \otimes (\Delta(r_{\pm}) -
r_{\pm} \otimes r_{\pm}) = (\Delta(l_{\pm}) - l_{\pm} \otimes
l_{\pm}) \otimes X_{\pm}.\] Since $A \otimes A$ is a free $R
\otimes R$-module on basis \eqref{aabasis}, and $\Delta(R)
\subseteq R \otimes R$, both sides must be zero, so that the
coassociativity of $\Delta$ implies (i)(a).

   Using the fact that $\Delta$ preserves \eqref{minus} combined
with the freeness of $A \otimes A$ as an $R \otimes R$-module,
\eqref{aabasis}, one deduces in a similar way to (a) that (i)(b)
and (i)(c) hold, and also that
\begin{align} \sigma(l_-) \otimes r_+ &= \xi l_- \otimes \sigma^{-1}(r_+),
\label{deltaminusiff3}\\ l_+ \otimes \sigma(r_-) &= \xi
\sigma^{-1}(l_+) \otimes r_-. \label{deltaminusiff4}
\end{align}

(ii) (a) Note first that if $R$ is a Hopf algebra, the antipode
condition combined with (i)(a) implies that $r_{\pm}$ and
$l_{\pm}$ are invertible. So the first part of (a) follows from
(i)(a). We postpone the proof of the last part of (a).

(b) For all $a \in R$, using \eqref{deltaxpm1} and \eqref{plus},
\begin{align} \Delta(X_{+}) \Delta(a) &= (l_{+} \otimes X_{+}
+ X_{+} \otimes r_{+}) \left( \sum a_1 \otimes a_2 \right) \nonumber \\
&= \sum l_{+} a_1 \otimes X_{+} a_2 + X_{+}a_1 \otimes
r_{+}a_2 \nonumber \\ &= \sum l_{+} a_1 \otimes
\sigma(a_2) X_{+} + \sigma(a_1) X_{+} \otimes
r_{+}a_2. \label{deltacalc} \end{align}
Now $\Delta$ preserves
\eqref{plus} if and only if, for all $a \in R$,
\begin{align*} \Delta(X_{+}) \Delta(a) &= \Delta(\sigma(a))
\Delta(X_{+}).\end{align*} By \eqref{deltaxpm1} and
\eqref{deltacalc}, this is equivalent to
\begin{align*} \sum l_{+} a_1 & \otimes
\sigma(a_2) X_{+} + \sigma(a_1) X_{+} \otimes
r_{+}a_2 = \Delta(\sigma(a))(X_{+} \otimes r_{+} +
l_{+} \otimes X_{+});
\end{align*} that is,
\begin{align*}& \qquad \left( \sum l_{+} a_1 \otimes \sigma(a_2) -
\Delta(\sigma(a))(l_{+} \otimes 1) \right)(1 \otimes
X_{+})\\ & \qquad \qquad \qquad = \left( \sum
\Delta(\sigma(a))(1 \otimes r_{+}) - \sigma(a_1)
\otimes r_{+}a_2 \right) (X_{+} \otimes 1). \end{align*}
Therefore, $\Delta$ preserves \eqref{plus} if and only if, for all
$a \in R$,
\begin{align*}
 &
(\Delta(\sigma(a))(l_{+} \otimes 1) - (l_{+} \otimes
1)(\id \otimes \sigma)\Delta(a)) (1 \otimes
X_{+})\\
& \quad \qquad \qquad + (\Delta(\sigma(a))(1 \otimes r_{+})
- (1 \otimes r_{+})(\sigma \otimes \id)\Delta(a)) (X_{+}
\otimes 1) = 0.
\end{align*}
Using the linear independence of the $R \otimes R$-basis
\eqref{aabasis} of $A \otimes A$, this is equivalent to the
coefficient of each of the above two terms being 0. Since $r_{+}$
and $l_{+}$ are invertible, it follows that $\Delta$ preserves
\eqref{plus} if and only if  \begin{equation}
\label{deltaplusiff1} \Delta(\sigma(a)) = (\ad_l(l_{+}) \otimes
\id)(\id \otimes \sigma)\Delta(a),\end{equation} and
\begin{equation} \label{deltaplusiff2} \Delta(\sigma(a)) = (\id
\otimes \ad_l(r_{+}))(\sigma \otimes \id)\Delta(a).
\end{equation}

Applying $m(\id \otimes \varepsilon)$ [resp. $m(\varepsilon
\otimes \id)$] to both sides of \eqref{deltaplusiff1} [resp.
\eqref{deltaplusiff2}] yields (ii)(b).

(c) This is proved in the same way as (b), starting from
\eqref{plusplus} rather than \eqref{plus}.

(a) The last part of (a) now follows by comparing the two
expressions for $\sigma$ in terms of $\tau^l_{\chi}$ [resp.
$\tau^r_{\chi}$] obtained in (b) and (c).

(d),(e) Let $g \in S$ and let $\mu := \chi(g) \in k^*$. Now $g \in G(R)$ by (ii)(a), so that (ii)(b) and (c) yield
$$ \sigma(g) = \mu \ad_l(l_+)(g) = \mu \ad_l(r_+)(g) = \mu
\ad_r(l_-)(g) = \mu \ad_r(r_-)(g).$$ Taking $g \in \{l_{\pm},
r_{\pm} \}$ now implies (d), and then (e) follows at once.

(f) This follows by applying (e) to \eqref{deltaminusiff3} and
\eqref{deltaminusiff4}. \qedhere
\end{proof}

\subsection{Relabelling the variables} \label{relabel}Let $R$ be a Hopf algebra whose structure extends to a
Hopf algebra structure on $A$ such that \eqref{deltaxpm1} holds. Then $r_{+}$ and $r_{-}$ are invertible, by Lemma \ref{change}
(ii)(a). We can equally well think of $A$ as the ambiskew polynomial algebra with the variables $X_{\pm}$ replaced by
$X_{\pm}{r_{\pm}}^{-1}$, with slight adjustments to $\sigma$, $\xi$ and $h$, as in the following lemma. This has the advantage that
the new variables are $(1,y_{\pm})-$primitive, for suitable elements $y_{\pm}$ of $Z(G(R))$, thus significantly simplifying some
later calculations.

\begin{lemma} Suppose that $R$ and $A$ are as in (\ref{ambiskew}). Suppose that
 $R$ is equipped with a Hopf algebra structure, which
 extends to a Hopf algebra structure on $A:=(A,m,u,\Delta,\varepsilon,S)$ such
that \eqref{deltaxpm1} holds. Set $\chi := \varepsilon \circ
\sigma \colon R \rightarrow k$. \begin{enumerate} \item Then there
is an alternative ambiskew presentation of $A$,
\[ A = A(R, X_+{r_+}^{-1}, X_-{r_-}^{-1}, \widehat{\sigma}, \widehat{\xi}, \widehat{h}),\]
where \begin{itemize} \item $\widehat{\sigma} := \ad_r(r_+) \circ
\sigma = \ad_l(r_-)\circ \sigma$; \item $\widehat{\xi} := \xi
\chi(r_+r_-)^{-1}$; \item $\widehat{h} :=
\chi({r_+})^{-1}h(r_+r_-)^{-1}$. \end{itemize} \item Setting
$\widehat{X}_+ := X_+{r_+}^{-1}$, $\widehat{X}_- :=
X_-{r_-}^{-1}$, and $y_{\pm} := l_{\pm}r_{\pm}^{-1}$,
\begin{equation}\label{hatform} \Delta (\widehat{X}_{\pm}) =
\widehat{X}_{\pm} \otimes 1 + y_{\pm} \otimes
\widehat{X}_{\pm}.\end{equation}
\end{enumerate}
\end{lemma}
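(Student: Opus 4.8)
The plan is to verify directly that the elements $\widehat{X}_{\pm} = X_{\pm}r_{\pm}^{-1}$, together with $R$, satisfy the three ambiskew relations \eqref{plus}--\eqref{minus} with the asserted data $(\widehat{\sigma},\widehat{\xi},\widehat{h})$, and then to read off the coproduct \eqref{hatform} from \eqref{deltaxpm1}. Throughout I would use the facts already recorded in Lemma \ref{change}(ii): that $r_{\pm}$ and $l_{\pm}$ are grouplike and hence invertible, that $r_+r_-,\,l_+l_- \in Z(R)$ with $r_+r_- = r_-r_+$, and---crucially---that $\sigma(g) = \chi(g)g$ for $g \in \{r_{\pm},l_{\pm}\}$ (part (e)). This last fact is what turns every passage of $X_{\pm}$ past a power of $r_{\pm}^{-1}$ into a scalar multiple, keeping all computations within the free-module framework of \S\ref{iterate}.

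For the first relation I would compute $\widehat{X}_+ r = X_+r_+^{-1}r = \sigma(r_+^{-1})\sigma(r)X_+$ and compare it with $\widehat{\sigma}(r)\widehat{X}_+ = \widehat{\sigma}(r)\sigma(r_+^{-1})X_+$; equating the coefficients of $X_+$, which is legitimate by the basis \eqref{abasis}, forces $\widehat{\sigma}(r) = \sigma(r_+)^{-1}\sigma(r)\sigma(r_+)$, and this collapses to $r_+^{-1}\sigma(r)r_+ = (\ad_r(r_+)\circ\sigma)(r)$ once the factors $\chi(r_+)^{\pm 1}$ cancel. The two displayed forms of $\widehat{\sigma}$ agree precisely because $r_+r_- \in Z(R)$: the identity $r_+^{-1}t\,r_+ = r_-\,t\,r_-^{-1}$ for all $t \in R$ is equivalent to $r_+r_-$ being central. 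The relation for $\widehat{X}_-$ is symmetric. The third relation is the delicate one: I would reduce both $\widehat{X}_+\widehat{X}_- = X_+r_+^{-1}X_-r_-^{-1}$ and $\widehat{X}_-\widehat{X}_+ = X_-r_-^{-1}X_+r_+^{-1}$ to scalar multiples of $X_+X_-\,r_+^{-1}r_-^{-1}$ and $X_-X_+\,r_+^{-1}r_-^{-1}$ respectively, by commuting the inner $r_{\pm}^{-1}$ past the opposite variable (again producing only scalars $\chi(r_{\pm})^{\pm 1}$), then substitute \eqref{minus} and match the $X_-X_+$-coefficient against the remaining grouplike term. Bookkeeping of these scalars is exactly what produces $\widehat{\xi} = \xi\chi(r_+r_-)^{-1}$ and $\widehat{h} = \chi(r_+)^{-1}h(r_+r_-)^{-1}$; along the way one notes $\widehat{\xi} \in k^*$, $\widehat{\sigma} \in \Aut(R)$ and (using $h \in Z(R)$) $\widehat{h} \in Z(R)$, so the hatted data is admissible.

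Having checked the relations, there is a surjective algebra homomorphism $\phi$ from the abstract ambiskew algebra $A(R,\widehat{X}_+,\widehat{X}_-,\widehat{\sigma},\widehat{\xi},\widehat{h})$ onto $A$, surjective since $X_{\pm} = \widehat{X}_{\pm}r_{\pm} \in \im \phi$. To see that $\phi$ is an isomorphism, and hence that this really is a presentation of $A$ itself, I would compare $R$-bases: a short induction using $\sigma(r_{\pm}) = \chi(r_{\pm})r_{\pm}$ shows that each monomial $\widehat{X}_+^{\,m}\widehat{X}_-^{\,n}$ equals $X_+^m X_-^n$ times a unit of $R$ (a nonzero scalar times $r_+^{-m}r_-^{-n}$). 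Thus $\phi$ carries the standard $R$-basis \eqref{abasis} of its domain bijectively onto the $R$-basis $\{X_+^m X_-^n\}$ of $A$, so $\phi$ is bijective.

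For part (2), \eqref{hatform} follows from $\Delta$ being an algebra map together with $\Delta(r_{\pm}^{-1}) = r_{\pm}^{-1}\otimes r_{\pm}^{-1}$ (as $r_{\pm}^{-1}$ is grouplike): expanding $\Delta(\widehat{X}_{\pm}) = \Delta(X_{\pm})\Delta(r_{\pm}^{-1}) = (X_{\pm}\otimes r_{\pm} + l_{\pm}\otimes X_{\pm})(r_{\pm}^{-1}\otimes r_{\pm}^{-1})$ gives $\widehat{X}_{\pm}\otimes 1 + l_{\pm}r_{\pm}^{-1}\otimes\widehat{X}_{\pm}$, which is \eqref{hatform} with $y_{\pm} = l_{\pm}r_{\pm}^{-1}$. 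I expect the main obstacle to be the scalar bookkeeping in the third ambiskew relation---pinning down $\widehat{\xi}$ and $\widehat{h}$ exactly---together with the easily overlooked basis comparison of the previous paragraph, which is what guarantees the new presentation recovers $A$ rather than a proper quotient.
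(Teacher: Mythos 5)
Your proof is correct and takes essentially the same route as the paper: the paper's own proof of part (i) consists of the single assertion that the claim is ``straightforward to check using Lemma \ref{change}'', and your computations are precisely that check, resting on the same ingredients --- $\sigma(g)=\chi(g)g$ for $g\in\{r_{\pm},l_{\pm}\}$, centrality of $r_+r_-$, and the grouplike (hence invertible) nature of $r_{\pm}$ and $l_{\pm}$ --- while part (ii) is the same one-line expansion the paper dismisses as ``clear from \eqref{deltaxpm1}''. Your extra step verifying that the natural surjection from the abstract ambiskew algebra onto $A$ is an isomorphism, via comparison of free $R$-module bases, is a point the paper leaves entirely implicit, and you handle it correctly.
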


\begin{proof} (i) It is straightforward to check using Lemma
\ref{change} that the stated data affords an alternative
presentation of $A$ as an ambiskew algebra.

(ii) This is clear from \eqref{deltaxpm1}.
\end{proof}

\subsection{Main theorem}

\begin{thrm} \label{newhopf} Let $R$ be a Hopf $k$-algebra.
\begin{enumerate}
\item Fix the following data:
\begin{itemize}
\item a character $\chi$ of $R$; \item elements $z \in Z(R) \cap
G(R)$ and $h \in Z(R) \cap P_{1,z}(R);$ \item a factorisation $z = y_+ y_- = y_-y_+$ with $y_{\pm} \in Z(G(R))$ and
$\chi(y_+) = \chi (y_-) := \xi.$
\end{itemize}
Define $\sigma := \tau^{l}_{\chi}$ and assume that the above data
is chosen such that \begin{equation}\label{strange} \sigma =
\ad_{l}(y_+)\tau^{r}_{\chi}.\end{equation} Then the ambiskew
algebra $A = A(R,\widehat{X}_+,\widehat{X}_-,\sigma,h,\xi)$ has a
Hopf structure extending the given structure on $R$, and such that
(\ref{hatform}) holds. The antipode $S$ of $R$ extends to an
antipode of $A$, with \begin{equation}\label{aunt}
S(\widehat{X}_{\pm}) = -y_{\pm}\widehat{X}_{\pm}. \end{equation}

\item Conversely, suppose that $A$ is an ambiskew algebra
constructed from $R$ as in (\ref{ambiskew}), and that $A$ admits a
structure of Hopf algebra extending the Hopf structure of $R$, and
such that (\ref{deltaxpm1}) holds. Then, after a change of
variables as in Lemma (\ref{relabel}), $A$ can be presented with
ambiskew variables satisfying (\ref{hatform}), and such that the
conditions of (i) are satisfied.
\end{enumerate}
\end{thrm}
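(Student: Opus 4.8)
The plan is to treat the two directions separately, with (i) the constructive core and (ii) a reassembly of the two preceding lemmas.

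For (i), I would define $\Delta$, $\varepsilon$ and $S$ on the generating set $R \cup \{\widehat{X}_+, \widehat{X}_-\}$: on $R$ they are the given Hopf structure maps, and on the new variables they are prescribed by \eqref{hatform}, by $\varepsilon(\widehat{X}_\pm) = 0$, and by \eqref{aunt}. Since $A$ is the ambiskew algebra presented by $R, \widehat{X}_+, \widehat{X}_-$ subject to \eqref{plus}--\eqref{minus}, to show that $\Delta$ and $\varepsilon$ extend to algebra homomorphisms $A \to A \otimes A$ and $A \to k$ it suffices to check that the prescribed images of the generators satisfy these three relations; likewise $S$ extends to an algebra anti-homomorphism once its values on generators respect the relations. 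The whole of (i) thus reduces to a finite list of identities in $R$ and in $A \otimes A$, to be verified from the hypotheses on $\chi, z, h, y_\pm$ and from \eqref{strange}.

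By the computation already carried out in the proof of Lemma \ref{change}(ii)(b), preservation of \eqref{plus} by $\Delta$ is equivalent to the coalgebra identities \eqref{deltaplusiff1} and \eqref{deltaplusiff2} (now with $l_+ = y_+$ and $r_+ = 1$). Here the two assumed expressions for $\sigma$ play complementary roles: using $\sigma = \ad_l(y_+)\tau^r_\chi$ together with multiplicativity of $\Delta$ and $\Delta(y_+) = y_+ \otimes y_+$ yields \eqref{deltaplusiff1}, while $\sigma = \tau^l_\chi$ yields \eqref{deltaplusiff2}, which for $r_+ = 1$ is just $\Delta\sigma = (\sigma \otimes \id)\Delta$ and is immediate from the definition of the left winding automorphism and coassociativity. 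The identities for \eqref{plusplus} follow the same pattern with $\ad_r$ in place of $\ad_l$ and $\sigma^{-1}$ for $\sigma$. Preservation of the coupling relation \eqref{minus} is where the arithmetic of $z, h, \xi$ enters: one needs $\Delta(h) = h \otimes 1 + z \otimes h$, which is the hypothesis $h \in P_{1,z}(R)$ with $z = y_+y_-$, together with the eigenvalue relations $\sigma(y_\pm) = \chi(y_\pm)y_\pm = \xi y_\pm$ (immediate from $\sigma = \tau^l_\chi$ and the grouplikeness of $y_\pm$), which reproduce \eqref{deltaminusiff3}--\eqref{deltaminusiff4} for the hatted variables. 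With the relations secured, coassociativity and the counit axiom are checked on generators, where on $\widehat{X}_\pm$ both reduce to grouplikeness of $y_\pm$, and then extend since both sides are algebra maps agreeing on generators. Finally $A$ is a Hopf algebra once $\sum S(a_1)a_2 = \varepsilon(a)1 = \sum a_1 S(a_2)$ holds on the generators; this extends to all of $A$ because the set on which it holds is a subalgebra when $S$ is an anti-homomorphism, and on $\widehat{X}_\pm$ it is a short computation from \eqref{aunt} and the grouplikeness of $y_\pm$.

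For (ii), Lemma \ref{relabel} produces from any ambiskew Hopf algebra $A$ over $R$ satisfying \eqref{deltaxpm1} the presentation with variables $\widehat{X}_\pm$ obeying \eqref{hatform}, with $y_\pm = l_\pm r_\pm^{-1}$ and relabelled data $\widehat{\sigma}, \widehat{\xi}, \widehat{h}$. It then remains to read off the data of (i): the character is $\chi := \varepsilon \circ \widehat{\sigma}$; one sets $z := y_+ y_-$ and checks $z \in Z(R) \cap G(R)$ and $\widehat{h} \in Z(R) \cap P_{1,z}(R)$ from Lemma \ref{change}(i)(c) and (ii)(a) with the relabelling formulas; the factorisation $z = y_+y_- = y_-y_+$ and $\chi(y_+) = \chi(y_-) = \widehat{\xi}$ come from Lemma \ref{change}(ii)(c),(e),(f); and $\widehat{\sigma} = \tau^l_\chi$ together with $\widehat{\sigma} = \ad_l(y_+)\tau^r_\chi$ is Lemma \ref{change}(ii)(b) applied to the new variables. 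The one point needing genuine argument is that $y_\pm$ lie in $Z(G(R))$, not merely in the abelian subgroup $B$ of Lemma \ref{change}(ii)(d); I expect this to follow by combining $\sigma(g) = \chi(g)g$ on $B$ with the winding-automorphism description of $\widehat{\sigma}$ on $G(R)$.

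The main obstacle is the relation-preservation step in (i), and within it the coupling relation \eqref{minus}: unlike \eqref{plus} and \eqref{plusplus}, whose verification is essentially dictated by the two forms of $\sigma$, preserving \eqref{minus} forces the simultaneous compatibility of $h$, its being $(1,z)$-primitive, the factorisation $z = y_+y_-$, and the scalar $\xi = \chi(y_\pm)$, so that all hypotheses are used at once. A secondary care point is confirming $y_\pm \in Z(G(R))$ in direction (ii).
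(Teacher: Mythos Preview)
Your proposal is correct and follows essentially the same route as the paper. The paper's own proof of (i) is a single sentence declaring the verification ``routine (but fairly long)'', so your outline of how the relations \eqref{plus}--\eqref{minus} are preserved and how coassociativity, counit and antipode transfer via algebra (anti-)homomorphisms is a genuine expansion, but it is the intended computation rather than a different method. For (ii), both you and the paper invoke Lemma~\ref{relabel} and then read off the data from Lemma~\ref{change}.

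The one point you rightly flag as needing its own argument, that $y_\pm \in Z(G(R))$ rather than merely in the abelian subgroup $B$, is resolved exactly along the line you suggest: for any $g \in G(R)$ one has $\tau^l_\chi(g) = \chi(g)g = \tau^r_\chi(g)$, so the identity $\tau^l_\chi = \ad_l(y_+)\circ\tau^r_\chi$ restricted to $G(R)$ forces $\ad_l(y_+)|_{G(R)} = \id$, whence $y_+ \in Z(G(R))$; the same with $\ad_r(y_-)$ from Lemma~\ref{change}(ii)(c) gives $y_- \in Z(G(R))$. This is precisely what the paper means when it cites Lemma~\ref{change}(ii)(b) for the centrality claim.
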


\begin{proof}(ii) Suppose that the ambiskew algebra
(\ref{ambiskew}) admits a structure of Hopf algebras extending
that of $R$, with $\Delta(X_{\pm})$ given by (\ref{deltaxpm1}).
Lemma (\ref{relabel}) shows that we can change the variables to
$\widehat{X}_{\pm} := X_{\pm}r_{\pm}^{\pm 1}$, and hence assume
that the ambiskew variables satisfy (\ref{hatform}), for some
elements $y_{\pm} \in G(R).$ For convenience, however, we retain
the notation $\sigma, h, \xi,$ after the change of variables.

Now let $\chi$ be the character $\varepsilon \circ \sigma$ of $R$ as in Lemma (\ref{change})(ii). By Lemma (\ref{change})(i)(b),(c)
and (ii)(a),(b), $\langle y_+, y_- \rangle$ is a central subgroup of $G(R)$, with
$$ z:= y_+y_- \in Z(R) \cap G(R), $$
and
$$ \Delta (h) = h \otimes 1 + z \otimes h. $$
By Lemma (\ref{change})(f),
$$ \chi (y_+) = \chi (y_-) = \xi. $$
Finally, (\ref{strange}) follows from Lemma (\ref{change})(ii)(b),
and the formulae for $S(\widehat{X}_{\pm})$ are determined by
(\ref{hatform}) and the fact that $y_{\pm}$ are group-like.

(i) Conversely, it is a routine (but fairly long) series of
calculations to check that, given the data as listed in (i), the
Hopf structure on $R$ can be extended to a Hopf structure on $A$
with $\Delta (\widehat{X}_{\pm})$ given by (\ref{hatform}), and
with (\ref{aunt}) holding.
\end{proof}

\section{Consequences of the Main Theorem}

\subsection{} We show first that once $\chi, z, y_+$ and $y_-$
have been chosen so as to satisfy the constraints of the Main
Theorem, there is little room for manoeuvre left in the choice of
$h$.

\begin{prop} \label{options} Let $R$ be a Hopf algebra and let $A = A(R,
\widehat{X}_+, \widehat{X}_-, \sigma, \xi,h)$ be an ambiskew Hopf
algebra with $R$ as a Hopf subalgebra and with $\widehat{X}_{\pm}$
satisfying (\ref{hatform}) for suitable $y_{\pm}$ in $G(R)$.
Recall that $h \in P_{1,z}(R)$ for $z := y_-y_+$, and $\xi :=
\chi(y_+) = \chi(y_-)$ by Theorem (\ref{newhopf}).

Then (at least) one of the following holds:
\begin{enumerate}
\item $\xi = \pm 1, \; \chi (h) = 0.$ \item $\xi = \pm 1,\; z =
1.$ \item $\xi \neq \pm 1, \; h = \frac{\chi(h)}{\xi^2 - 1}(z -
1).$
\end{enumerate}
\end{prop}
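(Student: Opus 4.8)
The plan is to exploit the two pieces of structure that $h$ simultaneously carries: it is $(1,z)$-primitive, so $\Delta(h) = h\otimes 1 + z\otimes h$, and it interacts with $\sigma$ in a controlled way because $\sigma = \tau^l_\chi$ applied to $h$ should recover $\chi(h)$-type data. First I would compute $\sigma(h)$ in two ways. Applying the definition $\sigma = \tau^l_\chi$ to the primitive element $h$ gives $\sigma(h) = \tau^l_\chi(h) = \sum \chi(h_1)h_2 = \chi(h)\cdot 1 + \chi(z)h$, using $\Delta(h) = h\otimes 1 + z\otimes h$ and that $\chi$ is a character with $\chi(1)=1$. Since $z = y_+y_-$ and $\chi(y_+)=\chi(y_-)=\xi$, we have $\chi(z) = \xi^2$, so this yields
\[ \sigma(h) \;=\; \chi(h)\,1 + \xi^2 h. \]

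The second expression for $\sigma(h)$ comes from the ambiskew relation \eqref{minus}, or more directly from the fact that $h \in Z(R)$ together with the action of $\sigma$ on the Hopf structure. The key relation to use is that $h$ is central and that $X_+ X_- = h + \xi X_- X_+$; applying the defining commutation relations \eqref{plus} and \eqref{plusplus} shows that conjugation-type constraints force $\sigma(h) = \xi^2 h$ in the generic situation, or more carefully, one extracts $\sigma(h)$ directly by noting that in the relabelled variables the element $h$ plays the role of a twisted commutator and must satisfy a compatibility equation. Comparing this with the computation above gives the single scalar equation
\[ \chi(h)\,1 + \xi^2 h \;=\; \xi^2 h \quad\text{or more precisely}\quad \chi(h)(z-1) = (\xi^2-1)\,h, \]
after accounting for how $\sigma$ twists the $(1,z)$-primitive $h$ and using that $h$ and $z-1$ span compatible pieces of $P_{1,z}(R)$.

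From this master identity the trichotomy falls out by case analysis on whether $\xi^2 - 1$ vanishes. If $\xi^2 \neq 1$, i.e. $\xi \neq \pm 1$, then I can divide and obtain $h = \frac{\chi(h)}{\xi^2-1}(z-1)$, which is exactly case (iii). If $\xi^2 = 1$, i.e. $\xi = \pm 1$, the identity degenerates to $\chi(h)(z-1) = 0$; since $R$ is a domain-free Hopf algebra this forces either $\chi(h) = 0$ (case (i)) or $z = 1$ (case (ii)). The main obstacle I anticipate is pinning down the second, independent expression for $\sigma(h)$ cleanly: one must be careful that $\sigma = \tau^l_\chi$ acts on the \emph{central} primitive element $h$ in a way consistent with \eqref{strange}, and verify that applying $\varepsilon$ or the counit to appropriate relations isolates the scalar $\chi(h)$ correctly rather than introducing spurious cross terms. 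Once that identity $\chi(h)(z-1) = (\xi^2-1)h$ is established, the rest is the elementary case split above.
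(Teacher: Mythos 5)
Your first computation of $\sigma(h)$ --- namely $\tau^l_\chi(h) = \chi(h)\cdot 1 + \chi(z)h = \chi(h) + \xi^2 h$ --- is correct and is exactly the paper's first step, and your closing case analysis is also fine (no hypothesis on $R$ is needed there: $\chi(h)$ is a scalar, so $\chi(h)(z-1)=0$ forces $\chi(h)=0$ or $z=1$). The gap is the second, independent expression for $\sigma(h)$, which is the crux of the argument and which your proposal does not actually establish. You propose to extract it from the ambiskew relations \eqref{plus}, \eqref{plusplus} and \eqref{minus}, claiming these force $\sigma(h)=\xi^2 h$ ``in the generic situation''. This cannot work: the ambiskew algebra $A(R,X_+,X_-,\sigma,h,\xi)$ is a well-defined iterated Ore extension (\ref{iterate}) for \emph{any} central $h$ and any $\sigma$, $\xi$, so the multiplicative relations alone impose no identity on $\sigma(h)$. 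Moreover $\sigma(h)=\xi^2 h$ is generally false for ambiskew Hopf algebras --- in the examples of (\ref{Laurent}) one has $h=\lambda(z-1)$ and $\sigma(h)=\lambda(\xi^2 z-1)\neq \xi^2 h$ --- and if it did hold, then combined with your first computation it would yield $\chi(h)=0$ identically, not the master identity $\chi(h)(z-1)=(\xi^2-1)h$, which in your write-up is asserted rather than derived.

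The missing step is Hopf-theoretic, not ring-theoretic: it is the hypothesis \eqref{strange} of Theorem \ref{newhopf}, which says that $\sigma$ admits the two descriptions $\sigma=\tau^l_\chi$ and $\sigma=\ad_l(y_+)\circ\tau^r_\chi$. Applying the second description to $h$, one gets $\tau^r_\chi(h)=\sum h_1\chi(h_2)= h\chi(1)+z\chi(h)=h+\chi(h)z$, and since $h$ and $z$ are central while $\ad_l(y_+)$ is conjugation by the grouplike $y_+$, the element $h+\chi(h)z$ is fixed by $\ad_l(y_+)$; hence $\sigma(h)=h+\chi(h)z$. Equating this with $\chi(h)+\xi^2h$ gives $(\xi^2-1)h=\chi(h)(z-1)$, after which your trichotomy argument completes the proof exactly as in the paper.
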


\begin{proof} From Theorem (\ref{newhopf}), $\Delta (h) = h
\otimes 1 + z \otimes h.$ Thus, calculating $\sigma (h)$ in two
ways using (\ref{strange}),
\begin{align*} \sigma(h) = \tau^l_{\chi}(h) &= \chi(h) + \xi^2h
\\
&= \mathrm{ad}_l(y_+)(\tau^r_{\chi}(h)) \\
&= \mathrm{ad}_l(y_+)\{ h + z \chi (h)\} \\
&= h + z\chi (h).
\end{align*}
Hence, $$(\xi^2 - 1)h \; = \; \chi (h) (z - 1), $$ and the
proposition follows.
\end{proof}

We'll give examples in $\S$4 to show that all three possibilities
occur.

\subsection{Special case: $R$ commutative.} \label{comm}
When Theorem (\ref{newhopf}) is specialised to the case where $R$ is commutative we obtain the main result of \cite{Har08}, albeit
stated somewhat differently from there:

\begin{prop} $\mathrm{(Hartwig)}$ Let $R$ be a commutative Hopf $k$-algebra and let
$\chi$ be a character of $R$ such that
\begin{equation}\label{centchar} \sigma \; := \; \tau^l_\chi \; = \; \tau^r_\chi.
\end{equation}
Choose the following: \begin{itemize} \item $y_{\pm} \in G(R)$
with $\xi := \chi (y_-) = \chi (y_+);$ \item $h \in P_{1,z}(R),$
where $z := y_-y_+$.
\end{itemize}
Then the ambiskew algebra $A := A(R,\widehat{X}_+,\widehat{X}_-,\sigma,\xi,h)$ has a Hopf structure extending that of $R$, with the
coproduct of $\widehat{X}_{\pm}$ given by (\ref{hatform}). Conversely, every ambiskew Hopf algebra extending the Hopf structure of
$R$ and satisfying (\ref{deltaxpm1}) arises in this way, after a change of ambiskew variables.
\end{prop}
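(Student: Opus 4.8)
The plan is to derive this as a corollary of the Main Theorem (Theorem \ref{newhopf}) by observing what commutativity of $R$ forces on the data $(\chi, z, y_+, y_-, h)$. The key simplification is that when $R$ is commutative, every inner automorphism $\ad_l(y)$ and $\ad_r(y)$ is trivial, since $\sum y_1 a S(y_2) = \sum y_1 S(y_2) a = \varepsilon(y) a = a$ when $y$ is grouplike and elements commute. Consequently the condition $\sigma = \ad_l(y_+)\circ\tau^r_\chi$ from \eqref{strange} collapses to $\sigma = \tau^r_\chi$; and since $\sigma$ is defined as $\tau^l_\chi$, this is exactly the condition \eqref{centchar} that $\tau^l_\chi = \tau^r_\chi$. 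So the constraint \eqref{strange} in the general theorem becomes the stated hypothesis that $\chi$ is a \emph{central} character. Likewise, commutativity makes $Z(R) = R$ and $Z(G(R)) = G(R)$, so the requirements $z \in Z(R)\cap G(R)$, $h \in Z(R)\cap P_{1,z}(R)$, and $y_\pm \in Z(G(R))$ reduce respectively to $z \in G(R)$, $h \in P_{1,z}(R)$, and $y_\pm \in G(R)$, matching the choices listed in the proposition.

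First I would record the elementary fact that $\ad_l$ and $\ad_r$ act trivially on a commutative $R$, together with the collapses of the various centralizer conditions just noted. Then for the forward direction I would verify that the data $(\chi, y_\pm, h)$ permitted in the proposition is exactly the data permitted in Theorem \ref{newhopf}(i): given \eqref{centchar}, the map $\sigma = \tau^l_\chi = \tau^r_\chi$ satisfies \eqref{strange} automatically (because $\ad_l(y_+) = \id$), and the factorization $z = y_+y_- = y_-y_+$ holds trivially since $G(R)$ is abelian and $\chi(y_+)=\chi(y_-)$ is assumed. Theorem \ref{newhopf}(i) then hands us the Hopf structure on $A$ with \eqref{hatform}. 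For the converse, I would invoke Theorem \ref{newhopf}(ii): any ambiskew Hopf extension of $R$ satisfying \eqref{deltaxpm1} can, after relabelling (Lemma \ref{relabel}), be presented with variables satisfying \eqref{hatform} and with the data of \ref{newhopf}(i) holding; reading \eqref{strange} in the commutative setting then yields precisely \eqref{centchar}, so the extension is of the asserted form.

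The main thing to check carefully is the equivalence between \eqref{strange} and \eqref{centchar} under commutativity — in particular that the two winding automorphisms genuinely coincide rather than merely agreeing up to an inner automorphism that happens to vanish on grouplikes only. Here the point is that $\ad_l(y_+)$ is the \emph{identity} endomorphism of the whole of $R$ (not just on $G(R)$) because $R$ is commutative, so \eqref{strange} reads $\tau^l_\chi = \sigma = \tau^r_\chi$ as maps on all of $R$, which is \eqref{centchar}. I expect no genuine obstacle beyond bookkeeping: the proposition is a specialisation, and the substantive content lives entirely in Theorem \ref{newhopf}, whose hypotheses degenerate transparently once commutativity kills the adjoint actions and flattens the centre conditions.
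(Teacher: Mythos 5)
Your proposal is correct and is exactly the paper's route: the paper states this Proposition as the specialisation of Theorem \ref{newhopf} to commutative $R$, leaving the details implicit, and your argument supplies precisely those details — commutativity kills the adjoint actions so that \eqref{strange} becomes \eqref{centchar}, and flattens the conditions $z \in Z(R)\cap G(R)$, $h \in Z(R)\cap P_{1,z}(R)$, $y_{\pm}\in Z(G(R))$ to the ones stated. Nothing further is needed.
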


Many interesting examples of ambiskew Hopf algebras with $R$
commutative are given in \cite[$\S 4$]{Har08}. See also \ref{poly}
and \ref{Laurent} below.

When $k$ has characteristic 0, so that $R$ is semiprime
\cite{W}, it is not hard to see that (\ref{centchar})
holds if and only if $\chi \in Z(G(R^{\circ})).$ (Here, $R^{\circ}$ denotes the Hopf dual of $R$.) We don't know if
the latter condition is sufficient in positive characteristic.

\subsection{Special case: $R$ cocommutative.} \label{cocomm} When
we are in the setting of the Main Theorem (\ref{newhopf}) and in addition $R$ is cocommutative, then $\tau^l_{\chi} = \tau^r_{\chi}$,
so that (\ref{strange}) forces $y_+ \in Z(R).$ Theorem (\ref{newhopf}) thus yields:

\begin{prop} Let $R$ be a cocommutative Hopf $k$-algebra. Choose
the following: \begin{itemize} \item a character $\chi$ of $R$;  \item $z,y_+, y_- \in G(R) \cap Z(R)$ with $z = y_+y_-$ and $\xi :=
\chi(y_+) = \chi(y_-);$
\item either (i) $h \in k(z - 1),$ or (ii) $h \in P(R)$. When (ii) holds, we also require $z = 1$, so that $y_- = y_+^{-1}$ and $\xi \in \{\pm 1 \}.$
\end{itemize}
Set $\sigma := \tau^l_{\chi}$. Then the ambiskew algebra $A := A(R,\widehat{X}_+,\widehat{X}_-,\sigma,\xi,h)$ has a Hopf structure
extending that of $R$, with the coproduct of $\widehat{X}_{\pm}$ given by (\ref{hatform}). Conversely, every ambiskew Hopf algebra
extending the Hopf structure of $R$ and satisfying (\ref{deltaxpm1}) arises in this way, after a change of ambiskew variables.
\end{prop}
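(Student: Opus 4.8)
The plan is to derive this Proposition as a direct specialisation of the Main Theorem \ref{newhopf}, exploiting cocommutativity to simplify the single non-trivial constraint \eqref{strange}. The whole argument hinges on one observation already recorded in the lead-in paragraph: if $R$ is cocommutative then $\Delta$ is symmetric, so $\tau^l_\chi = \tau^r_\chi$ for every character $\chi$. Indeed, $\tau^r_\chi(r) = \sum r_1\chi(r_2)$ and $\tau^l_\chi(r) = \sum \chi(r_1)r_2$ agree once $\sum r_1\otimes r_2 = \sum r_2 \otimes r_1$. Consequently the hypothesis $\sigma = \tau^l_\chi = \ad_l(y_+)\circ\tau^r_\chi$ of Theorem \ref{newhopf} collapses to $\ad_l(y_+) = \id$ on the relevant elements, i.e.\ it forces $y_+$ (and hence $y_- = z y_+^{-1}$) to be central in $R$.

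For the forward direction I would simply verify that the data listed in the Proposition is exactly an instance of the data required by Theorem \ref{newhopf}(i). Given $\chi$, central grouplikes $z = y_+y_-$ with $\chi(y_+)=\chi(y_-)=\xi$, and $h$ as specified, define $\sigma := \tau^l_\chi$. Because $y_+ \in Z(R)$ we have $\ad_l(y_+) = \id$, and because $R$ is cocommutative $\tau^r_\chi = \tau^l_\chi = \sigma$; hence $\ad_l(y_+)\circ\tau^r_\chi = \sigma$, so \eqref{strange} holds automatically. It remains only to check the condition $h \in Z(R)\cap P_{1,z}(R)$. The centrality is built into case (i) ($h\in k(z-1)\subseteq Z(R)$) and into case (ii) (where $h\in P(R)$ together with $z=1$ gives $h$ primitive with $\Delta(h)=h\otimes 1 + 1\otimes h = h\otimes 1 + z\otimes h$); in case (i), $h\in k(z-1)$ lies in $P_{1,z}(R)$ since $\Delta(z-1)=z\otimes z - 1\otimes 1 = (z-1)\otimes 1 + z\otimes(z-1)$. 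Thus Theorem \ref{newhopf}(i) applies and produces the asserted Hopf structure with \eqref{hatform}.

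For the converse, I would invoke Theorem \ref{newhopf}(ii): any ambiskew Hopf extension of $R$ satisfying \eqref{deltaxpm1} becomes, after the relabelling of Lemma \ref{relabel}, an extension satisfying \eqref{hatform} and meeting all the conditions of \ref{newhopf}(i). So one gets $\chi$, central grouplikes $y_\pm$ with $z=y_+y_-$ and $\chi(y_+)=\chi(y_-)=\xi$, and $h\in Z(R)\cap P_{1,z}(R)$ subject to \eqref{strange}. The only thing needing extra work is to show this forces $h$ into case (i) or case (ii). Here I would run the computation of Proposition \ref{options}: comparing $\sigma(h)=\tau^l_\chi(h)$ with $\ad_l(y_+)\tau^r_\chi(h)$ and using cocommutativity ($\tau^l_\chi=\tau^r_\chi$) and centrality of $y_+$ yields $(\xi^2-1)h = \chi(h)(z-1)$ as before. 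If $\xi\neq\pm1$ then $h = \tfrac{\chi(h)}{\xi^2-1}(z-1)\in k(z-1)$, which is case (i). If $\xi=\pm1$, the identity gives $\chi(h)(z-1)=0$; if $z\neq 1$ then $\chi(h)=0$, but I would then argue that a $(1,z)$-primitive $h$ with $\chi(h)=0$ can, after adjusting by a scalar multiple of $z-1$, be taken to lie in $k(z-1)$ as well, keeping us in case (i)—the alternative being the genuinely primitive situation $z=1$, which is case (ii).

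The main obstacle I anticipate is precisely this last bookkeeping in the converse: disentangling the overlap between cases (i) and (ii) and confirming that the dichotomy stated in the Proposition (with $z=1$, $\xi\in\{\pm1\}$ imposed exactly in case (ii)) correctly and exhaustively captures the constraint $(\xi^2-1)h=\chi(h)(z-1)$ together with $h\in P_{1,z}(R)$. Everything else is a mechanical translation of the Main Theorem's hypotheses under the simplifications afforded by cocommutativity; no independent verification of the Hopf axioms is needed, since those are already discharged by Theorem \ref{newhopf}.
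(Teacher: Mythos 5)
Your strategy coincides with the paper's: the paper also obtains this Proposition by specialising Theorem \ref{newhopf}, observing that cocommutativity gives $\tau^l_{\chi}=\tau^r_{\chi}$, so that \eqref{strange} reduces to $\ad_l(y_+)=\id$, i.e.\ forces $y_+$ (hence $y_-$) into $Z(R)$. Your forward direction is fine. The genuine gap is in the converse, at exactly the point you flagged: you must show the case list (i)/(ii) is \emph{exhaustive}, i.e.\ that any admissible $h\in Z(R)\cap P_{1,z}(R)$ falls into one of the two cases, and your argument does not do this. Proposition \ref{options} only handles $\xi\neq\pm 1$; in the remaining case $\xi=\pm 1$, $z\neq 1$, $\chi(h)=0$ it gives no information about $h$, and your proposed fix (``adjust $h$ by a scalar multiple of $z-1$'') is not a legitimate move: replacing $h$ by $h+\lambda(z-1)$ changes the defining relation \eqref{minus} of $A$, and is not among the permitted changes of ambiskew variables of Lemma \ref{relabel}. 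The missing ingredient is a structural fact about cocommutative Hopf algebras which makes the dichotomy immediate and renders the Proposition \ref{options} computation unnecessary: if $R$ is cocommutative and $h\in P_{1,z}(R)$, then applying the flip of tensorands to $\Delta(h)=h\otimes 1+z\otimes h$ and comparing gives
\begin{equation*}
h\otimes(1-z) \;=\; (1-z)\otimes h ,
\end{equation*}
which, when $z\neq 1$, forces $h$ and $1-z$ to be linearly dependent, so $P_{1,z}(R)=k(z-1)$ (case (i)); when $z=1$, $P_{1,z}(R)=P(R)$ (case (ii)), and then $\xi^2=\chi(y_+)\chi(y_-)=\chi(z)=1$, so $\xi=\pm 1$. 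With this observation the converse follows at once from Theorem \ref{newhopf}(ii).

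Two smaller points. First, your claim that centrality of $h$ is ``built into case (ii)'' is wrong as stated: a primitive element of a cocommutative Hopf algebra need not be central (take $R=U(\mathfrak{g})$ for nonabelian $\mathfrak{g}$). What saves the statement is that $h\in Z(R)$ is part of the definition of the ambiskew data in (\ref{ambiskew}), so it is a standing hypothesis rather than something to verify. Second, once the description of $P_{1,z}(R)$ above is in hand, note that it holds over an arbitrary field; no pointedness or algebraic closure is needed, which is why the Proposition carries no such hypothesis.
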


Notice that the argument in the above proposition to deduce that $y_+$ (and therefore also $y_-$) are in $Z(R)$ applies in a similar
way to the general case, allowing us to conclude that, in the setting of Theorem (\ref{newhopf}),
\begin{equation*}
y_+ \textit{ and } y_- \newline \textit{ are in the centre of the
subalgebra of cocommutative elements of } R. \end{equation*}

\section{Examples} \label{examples}
\subsection{$R$ the polynomial algebra in a single variable.} \label{poly} Let $R = k[t]$, so $G(R) = \{1\}$ and $P(R) = kt.$ Propositions
(\ref{comm}) and (\ref{cocomm}) both apply. There are three cases, consisting of two singletons and a one-parameter infinite family.

(i) Take
$$ z = y_+ = y_- = \xi = 1, \; \; and \; \; h = t, $$
and $\chi \in \mathrm{char}(R)$ with $\chi (t) := \lambda \neq 0.$ Thus
$$ \sigma (t) = t + \lambda, $$
and
$$
  A \quad = \quad  A(k[t], X_+,X_-, \sigma,t,1)
\quad = \quad k \langle t, X_+,X_- \rangle,$$ with relations \begin{align*} X_+ t \quad&= \quad(t + \lambda )X_+,\\
X_- t \quad&= \quad(t - \lambda) X_-, \\
X_+ X_- - X_-X_+ \quad &=\quad t.
\end{align*}
After an obvious change of variables, $A \cong U(\mathfrak{sl}(2,k)).$

(ii) Fix the parameters as in (i), except that $\lambda = \chi (t) = 0.$ Thus $\sigma$ is the identity map and $A = k\langle t,
X_+,X_- \rangle$, with relations
$$ [X_+,t] = [X_-,t] = 0, \quad [X_+,X_-] = t,$$ so $A$ is the enveloping algebra of the 3-dimensional Heisenberg Lie algebra.

(iii) Fix the parameters as in (i) or (ii), except that $h = 0.$ Then $A =k\langle t, X_+,X_- \rangle$, with relations
$$ [t,X_+] = -\lambda X_+, \quad [t, X_-] = \lambda X_-, \quad [X_+,X_-] = 0.$$
So $A$ is the enveloping algebra of a 3-dimensional solvable Lie algebra with adjoint eigenvalues $\{\pm \lambda \}$, abelian when
$\lambda = 0.$

\subsection{$R$ the Laurent polynomial algebra in a single variable.} \label{Laurent} In this case the generic example is $U_q
(\mathfrak{sl}(2,k))$, but there are more variants than in (\ref{poly}). Let $R = k[t^{\pm 1}]$, so $ G(R) = \langle t \rangle.$
Propositions (\ref{comm}) and (\ref{cocomm}) apply. Start by fixing a character $\chi$ of $R$, with $\chi (t) := \eta \in k^*$, so that $\sigma (t) = \eta t.$ Choose also a group-like element
$$ z  =  t^m, \qquad (m \in \mathbb{Z}),$$ and a factorisation of $z$,
$$z \quad := \quad t^{\ell} t^n,$$
so that
$$ y_+ := t^{\ell}, \quad y_- := t^n; \quad (\ell,n \in \mathbb{Z}, \, m = \ell + n).$$
Since we must have $\chi (y_+) = \chi (y_-)$, we require
\begin{equation} \eta^{\ell - n} \quad = \quad 1. \label{root} \end{equation}

\textbf{Case (i):} Suppose that
\begin{equation} \ell \quad = \quad n. \label{quantum} \end{equation}
That is, $y_+ = y_- = t^{\ell}$ and $$m \quad = \quad 2\ell, $$ with \begin{equation} \chi (y_+) \quad = \quad \eta^{\ell} \quad =:
\quad \xi. \label{map} \end{equation} In the proposition below we initially assume that
\begin{equation}\label{nonzero} m \neq 0; \quad \textit{that is,} \quad z \neq 1. \end{equation}
Then Proposition (\ref{cocomm}) tells us that, for some $\lambda \in k$,
$$ h \quad = \quad \lambda (z - 1). $$
Again, we avoid degenerate cases initially by assuming that
\begin{equation} \label{ass} \lambda \quad \neq \quad 0. \end{equation}
Let $A$ be the ambiskew algebra with these parameters,
$$ A \quad = \quad A(k[t^{\pm 1}], X_+, X_-, \sigma, \lambda(t^m - 1), \eta^{\ell}).$$

\begin{prop} Fix the notation as at the start of the paragraph, and  assume (\ref{quantum}). \begin{enumerate}
\item Assume (\ref{nonzero}) and (\ref{ass}), and that $\eta$ is not an $\ell$th root of 1. Then $A$ is the Hopf algebra $A = k \langle t^{\pm 1},E,F \rangle$, with
\begin{align*} Et = \eta tE, \quad & \quad Ft = \eta^{-1}tF, \\
EF - FE &= \frac{t^{\ell} - t^{-\ell}}{\eta^{\ell/2} - \eta^{-\ell/2}}; \end{align*} moreover,
\begin{align*} \Delta (E) &= E \otimes 1 + t^{\ell} \otimes E, \\ \Delta (F) &= F \otimes t^{-\ell} + 1 \otimes F. \end{align*}
That is, $A$ is the variant of the standard quantised $\mathfrak{sl}(2)$ which is free of rank $\ell$ as a module over its subHopf
algebra $U_{q}(\mathfrak{sl}(2,k)),$ where $q = \eta^{\frac{\ell}{2}}.$
\item Assume (\ref{nonzero}) and (\ref{ass}), and that $\eta $ is an $\ell$th root of 1. Then $A$ is as in (i), except that
$$ EF -FE = t^{\ell} - t^{-\ell}. $$
\item Assume that $\lambda = 0$ so that $h = 0$. Allow arbitrary $m \in \mathbb{Z}$. Then $A$ is as in (i), except that $$ EF = FE. $$
Thus $A$ is a quantum affine 3-space with a line removed.
\end{enumerate}
\end{prop}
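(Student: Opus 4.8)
The plan is to obtain each presentation by a single invertible change of the ambiskew variables and then read off the relations from \eqref{plus}, \eqref{plusplus}, \eqref{minus} and \eqref{hatform}. Throughout we are in case \eqref{quantum}, so $y_+=y_-=t^{\ell}$, $z=t^{2\ell}$, $\xi=\eta^{\ell}$, and (by the discussion preceding the proposition, via Proposition (\ref{cocomm})) $h=\lambda(t^{2\ell}-1)$. Since $\sigma=\tau^{l}_{\chi}$ and $\Delta(t)=t\otimes t$ we have $\sigma(t)=\eta t$, so \eqref{plus} and \eqref{plusplus} read $X_+t=\eta t X_+$ and $X_-t=\eta^{-1}tX_-$, while \eqref{hatform} gives $\Delta(X_\pm)=X_\pm\otimes 1+t^{\ell}\otimes X_\pm$. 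I would set $E:=X_+$ and $F:=c\,X_-t^{-\ell}$, with $c\in k^*$ a normalising scalar fixed below. Because $t$ is a unit and $E,F$ differ from $X_+,X_-$ only by units of $R$, the $k$-span of $\{t^iE^mF^n:i\in\mathbb{Z},\,m,n\geq 0\}$ coincides with that of the free basis \eqref{abasis}, hence is itself a $k$-basis of $A$; consequently $A=k\langle t^{\pm1},E,F\rangle$, and once the relations below are verified they form a complete set (the abstract algebra on these generators and relations surjects onto $A$ and is spanned by the image of $\{t^iE^mF^n\}$, so the surjection is an isomorphism). The coproduct of $E$ is \eqref{hatform} verbatim, and since $t^{\ell}$ is grouplike, $\Delta(F)=c\,(X_-\otimes 1+t^{\ell}\otimes X_-)(t^{-\ell}\otimes t^{-\ell})=F\otimes t^{-\ell}+1\otimes F$, as required.

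Next I would compute the three relations. From $X_+t=\eta tX_+$ one gets $Et=\eta tE$, and $Ft=cX_-t^{-\ell}t=cX_-t\,t^{-\ell}=c\eta^{-1}tX_-t^{-\ell}=\eta^{-1}tF$. Using $t^{-\ell}X_+=\eta^{\ell}X_+t^{-\ell}$ (obtained by iterating $t^{-1}X_+=\eta X_+t^{-1}$), the commutator becomes
\[ EF-FE = c\bigl(X_+X_- - \eta^{\ell}X_-X_+\bigr)t^{-\ell} = c\,h\,t^{-\ell} = c\lambda(t^{2\ell}-1)t^{-\ell} = c\lambda\,(t^{\ell}-t^{-\ell}), \]
where the second equality is the ambiskew relation \eqref{minus} with $\xi=\eta^{\ell}$. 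It remains only to fix $c$.

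In case (i) $\eta$ is not an $\ell$th root of $1$, so $\xi=\eta^{\ell}\neq 1$; fixing a square root $q:=\eta^{\ell/2}$ we have $\eta^{\ell/2}-\eta^{-\ell/2}=q-q^{-1}\neq 0$, and by \eqref{ass} $\lambda\neq 0$, so $c:=[\lambda(\eta^{\ell/2}-\eta^{-\ell/2})]^{-1}$ is well defined and nonzero, giving $EF-FE=(t^{\ell}-t^{-\ell})/(\eta^{\ell/2}-\eta^{-\ell/2})$. In case (ii) $\eta^{\ell}=1$ forces $\xi=1$, so the displayed computation yields $EF-FE=c\lambda(t^{\ell}-t^{-\ell})$; taking $c:=\lambda^{-1}$ (legitimate since $\lambda\neq 0$) gives $EF-FE=t^{\ell}-t^{-\ell}$. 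In case (iii) $\lambda=0$, so $h=0$ and, dropping \eqref{nonzero}, the commutator vanishes for any choice (say $c=1$), whence $EF=FE$, i.e.\ the quantum affine $3$-space $k\langle t,E,F\rangle$ with $t$ inverted. Finally, the subalgebra generated by $E$, $F$ and $K:=t^{\ell}$ is, under the relations and coproducts just found, a copy of $U_q(\mathfrak{sl}(2,k))$ with $q=\eta^{\ell/2}$, and since $k[t^{\pm1}]$ is free of rank $\ell$ over $k[t^{\pm\ell}]$ with basis $1,t,\dots,t^{\ell-1}$, the algebra $A$ is free of rank $\ell$ over it as stated.

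The only genuinely delicate point is the normalisation: the factor $\eta^{\ell/2}-\eta^{-\ell/2}$ in the denominator is exactly what must not vanish, and its vanishing is equivalent to $\eta^{\ell}=1$. This is what separates case (i) from case (ii), and it simultaneously explains why $\xi$ collapses to $1$ in (ii); everything else is the routine bookkeeping of moving units of $R$ past $X_\pm$.
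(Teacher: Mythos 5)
Your proposal is correct and follows exactly the paper's own route: the paper's proof consists precisely of the substitutions $E:=X_+$, $F:=cX_-t^{-\ell}$ with $c=\lambda^{-1}(\eta^{\ell/2}-\eta^{-\ell/2})^{-1}$, $c=\lambda^{-1}$, and $c=1$ in cases (i), (ii), (iii) respectively, leaving the verifications implicit. You have simply supplied the routine details (the commutator computation $EF-FE=c\,h\,t^{-\ell}$, the coproduct of $F$, completeness of the relations, and the freeness over $U_q(\mathfrak{sl}(2,k))$) that the paper omits.
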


\begin{proof} (i) Make the change of variables $E := X_+, \; F := \lambda^{-1}(\eta^{\ell /2} - \eta^{-\ell /2})^{-1}X_-t^{-\ell}.$

(ii) Put $E := X_+, \; F:= \lambda^{-1}X_-t^{-\ell}.$

(iii) Put $E := X_+, \; F := X_-t^{-\ell}$. \qedhere
\end{proof}

\textbf{Case (ii):} Suppose that
\begin{equation} \ell \neq n, \label{noteq} \end{equation}
so that, by (\ref{root}),
\begin{equation} \eta \textit{ is a primitive } \omega \textit{th root of 1, where } \omega | (\ell - n). \label{arrow}
\end{equation}
Changing the ambiskew variables to $E := X_+, \, F := X_- t^{-n},$ we obtain:

\begin{prop} Fix the notation as at the start of the paragraph, and  assume (\ref{noteq}), so that (\ref{arrow}) also holds. We can
normalise so that either $\lambda = 0$ or $\lambda = 1$. Then $A = k \langle t^{\pm 1},E,F \rangle$ has relations
\begin{align*} Et = \eta tE; \quad &\, \quad Ft = \eta^{-1} tF, \\
EF - FE \quad &= \quad \lambda (t^{\ell} - t^{-n}),
\end{align*}
and coproduct given by
\begin{align*} \Delta (E) =& E \otimes 1 + t^{\ell} \otimes E, \\
\Delta (F) =& F \otimes t^{-n} + 1 \otimes F. \end{align*}
\end{prop}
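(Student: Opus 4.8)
The plan is to reduce everything to the two defining ingredients already fixed at the start of the paragraph, namely the commutation rules \eqref{plus}--\eqref{plusplus} and the ambiskew relation \eqref{minus}, together with the skew-primitive coproducts supplied by Theorem \ref{newhopf}. First I would record that $R=k[t^{\pm1}]$ is both commutative and cocommutative, so that Propositions \ref{comm} and \ref{cocomm} apply and guarantee that $A$ really is a Hopf algebra in which the variables may be taken to satisfy \eqref{hatform} with $y_+=t^{\ell}$ and $y_-=t^{n}$; explicitly
\[ \Delta(X_+)=X_+\otimes 1+t^{\ell}\otimes X_+,\qquad \Delta(X_-)=X_-\otimes 1+t^{n}\otimes X_-. \]
The single arithmetic fact that drives the whole proof is that, by \eqref{root} (which is forced by \eqref{noteq} via \eqref{arrow}), one has $\eta^{\ell}=\eta^{n}$, and hence $\xi=\chi(y_+)=\eta^{\ell}=\eta^{n}$.

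Next I would pin down $h$. Since $R$ is commutative, $Z(R)=R$, so the constraint $h\in P_{1,z}(R)\cap Z(R)$ reduces to computing the $(1,t^{m})$-primitive elements of $k[t^{\pm1}]$. A direct comparison of coefficients against $\Delta(t^{j})=t^{j}\otimes t^{j}$ shows that $P_{1,t^{m}}(R)=k(t^{m}-1)$ when $m\neq0$ and $P_{1,1}(R)=0$; thus $h=\lambda(t^{m}-1)$ for some $\lambda\in k$. Rescaling $X_-\mapsto\lambda^{-1}X_-$ when $\lambda\neq0$ leaves \eqref{plusplus} and the coproduct form \eqref{hatform} intact while replacing $h$ by $t^{m}-1$; this is precisely the normalisation that lets us assume $\lambda\in\{0,1\}$.

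Then I would carry out the change of variables $E:=X_+$, $F:=X_-t^{-n}$ and simply compute. The commutation relations $Et=\eta tE$ and $Ft=\eta^{-1}tF$ follow at once from \eqref{plus} and \eqref{plusplus}. For the bracket, using $t^{-n}X_+=\eta^{n}X_+t^{-n}$ (a consequence of \eqref{plus}) gives $FE=\eta^{n}X_-X_+t^{-n}$, so that
\[ EF-FE=\bigl(X_+X_- -\eta^{n}X_-X_+\bigr)t^{-n}. \]
Here the key cancellation occurs: by \eqref{minus} and $\xi=\eta^{n}$ the bracketed term equals $h+(\xi-\eta^{n})X_-X_+=h$, whence $EF-FE=h\,t^{-n}=\lambda(t^{m}-1)t^{-n}=\lambda(t^{\ell}-t^{-n})$. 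The coproducts are then immediate: $\Delta(E)=\Delta(X_+)=E\otimes1+t^{\ell}\otimes E$, while multiplicativity of $\Delta$ gives $\Delta(F)=(X_-\otimes1+t^{n}\otimes X_-)(t^{-n}\otimes t^{-n})=F\otimes t^{-n}+1\otimes F$. The only genuinely non-formal point is the vanishing $\xi-\eta^{n}=0$, and this is exactly where the hypothesis $\ell\neq n$ enters, since \eqref{noteq} is what forces $\eta$ to be a root of unity with $\eta^{\ell-n}=1$ and hence makes the $X_-X_+$ term drop out. Everything else is routine manipulation inside the free $R$-module $A$.
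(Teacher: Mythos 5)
Your proof is correct and takes essentially the same route as the paper, whose entire argument for this proposition is the change of variables $E:=X_+$, $F:=X_-t^{-n}$ with the verification left to the reader; your computation (using $\xi=\eta^{\ell}=\eta^{n}$ to kill the $X_-X_+$ term, the identification $P_{1,t^m}(R)=k(t^m-1)$, and multiplicativity of $\Delta$ for the coproduct of $F$) is exactly that verification written out. One harmless slip of logic: \eqref{root} is a standing constraint forced by $\chi(y_+)=\chi(y_-)$, and it is \eqref{noteq} together with \eqref{root} that yields \eqref{arrow}, not the other way round.
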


\subsection{$R$ the quantised enveloping algebra of
$\mathfrak{sl}_2$}\label{qsl} We work with the standard presentation of $R$, namely
\begin{align*} k \langle E, F, K^{\pm 1} :& KE = q^2 EK \rangle \\
& KF = q^{-2} FK \\
& EF - FE = \frac{K - K^{ -1} }{q - q^{ -1} } \end{align*} where $q \in k$, $q \neq 0, \pm 1$. The coproduct is defined by requiring
$K \in G(R)$, $E \in P_{1,K}(R)$, $F \in P_{K^{-1},1}(R).$ There are two characters: (i) $\chi$ defined by $\chi(E) = \chi (F) = 0,
\chi (K) = -1,$ and (ii) the counit $\varepsilon.$

\textbf{Case (i):} \textbf{The non-trivial character} $\chi$. In this case
$$ \sigma = \tau^{\ell}_{\chi} : K \mapsto -K, \quad E \mapsto -E, \quad F \mapsto F; $$
whereas
$$ \tau^{r}_{\chi} : K \mapsto -K, \quad E \mapsto E, \quad F \mapsto -F.$$
To satisfy (\ref{strange}), we thus need
$$ \mathrm{ad}_{\ell}(y_+ ) : K \mapsto K, \quad E \mapsto -E, \quad F \mapsto -F. $$
For a group-like element $y_+ = K^r$ of $R$, this holds if and only if
$$ q^{2r} = q^{-2r} = -1. $$
Thus, for some positive integer $\widehat{\ell}$,
$$ q \textit{ is a primitive } 4\widehat{\ell}^{\textit{th}} \textit{ root of } 1, $$
with
$$ r = \widehat{\ell} s $$
for some odd positive integer $s$. In particular, \begin{equation} \label{centre1} Z(R) \cap G(R) = \langle K^{2\widehat{\ell}}
\rangle, \end{equation} so we can fix
$$ z := K^{2\widehat{\ell} m}, $$
for some $m \in \mathbb{Z}.$ Then
$$ y_- = zy_{+}^{-1}   = K^{2\widehat{\ell} m - \widehat{\ell} s} . $$
Therefore, noting that $s$ and $p := 2m - s$ are odd,
$$ \xi := \chi (y_+) = \chi (y_-) = (-1)^{\widehat{\ell}} .$$
Finally, choose $h \in Z(R) \cap P_{1,z}(R)$; that is,
$$ h = \lambda (1 - z) $$
for some $\lambda \in k$, which, after normalising, we can take to be either 0 or 1.

\textbf{Case (ii):} $\chi = \varepsilon$, \textbf{the counit.} Thus $\sigma$ is the identity map on $R$, and (\ref{strange}) forces
$y_+ \in Z(R) \cap G(R).$ Since  $z$ must always be central, we get $z, y_=, y_- \in Z(R) \cap G(R)$ with $z = y_+y_-$, and $\xi :=
\chi(y_+) = \chi(y_-) = 1.$

Define $r$ by
$$ r := \left\{\begin{array}{ll}  \ell & \textit{ if $q$ is a primitive $\ell^{th}$ root of 1, $\ell$ odd}\\
\ell/2 & \textit{ if $q$ is a primitive $\ell^{th}$ root of 1, $\ell$ even}\\
 0 & \textit{ otherwise.}
\end{array}
\right.
 $$
Thus
$$ Z(R) \cap G(R) = \langle K^{r} \rangle, $$
so we can choose
$$ z := K^{r m}, $$ for some $m \in \mathbb{Z}.$ Then we define
$$ y_+ = K^{r s} \quad \textit{ and } \quad y_- = K^{r p}, $$
for $s,p \in \mathbb{Z}$ with $s + p = m.$ Finally, we set
$$ h = \lambda (1 - z) $$
for some $\lambda \in k$ which after normalisation we can take to be either 0 or 1.

Summarising the above calculations, we have proved:

\begin{prop} Let $R$ be the quantised enveloping algebra $U_q (\mathfrak{sl}_2)$, where $q \in k$, $q \neq 0, \pm 1,$ as presented at
the start of (\ref{qsl}). The following is a complete list of the ambiskew Hopf algebras $A = \langle R, X_+, X_- \rangle$ satisfying
(\ref{deltaxpm1}), after a change of variables so that (\ref{hatform}) holds.

\begin{enumerate} \item Let $\chi = \varepsilon$, the counit, with $q$ arbitrary. Set $$A = R[X_+,X_-] = R \otimes k[X_=,X_-],$$ with
$X _+, X_- \in P(A).$
\item Let $\chi = \varepsilon$, the counit, with $q$ a primitive $\ell^{th}$ root of 1. Set $r$ to be $\ell$ if $\ell$ is odd and
$\ell/2$ if $\ell$ is even. Fix integers $m,s$ and $p$ with $m = s + p,$ and $\lambda \in \{0,1\}.$ Define $A = \langle R,
X_+,X_- \rangle,$, where $X_+$ and $X_-$ commute with $R$,
$$ X_+ X_- - X_- X_+ = \lambda(1 - K^{rm}), $$
and $$X_+ \in P_{1,K^{rs}}(A), \qquad X_- \in P_{1,K^{rp}}(A). $$
\item Let $\chi$ be the non-trivial character of $R$, with $q$ a primitive $4\widehat{\ell}^{th}$ root of 1. Choose integers $m,s$
and  $p$, with $s$ and $p$ odd and $s + p = 2m,$ and $\lambda \in \{0,1\}.$ Set $\xi := (-1)^{\widehat{\ell}}.$ Define $A =
\langle R, X_+,X_- \rangle,$, where$$ X_{\pm}E = -EX_{\pm}, \quad X_{\pm}F =  FX_{\pm}, \quad X_{\pm}K = -KX_{\pm},$$ with
$$ X_+ X_- - (-1)^{\widehat{\ell}}X_- X_+ = \lambda (1 - K^{2\widehat{\ell}m}),$$
where $$X_+ \in P_{1,K^{\widehat{\ell}s}}(A), \qquad X_- \in P_{1,K^{\widehat{\ell}p}}(A). $$
\end{enumerate}
\end{prop}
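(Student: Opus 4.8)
The final statement is the Proposition in section \ref{qsl} that classifies all ambiskew Hopf algebras over $R = U_q(\mathfrak{sl}_2)$. The plan is to apply Theorem \ref{newhopf} directly: every such $A$ corresponds to a choice of the data $(\chi, z, h, y_+, y_-)$ satisfying the five bulleted constraints, so the proof amounts to enumerating all admissible data and reading off the resulting relations and coproducts. The preamble to the proposition has, in effect, already carried out this enumeration; what remains is to organise it as a verification that the list is both complete and consistent.

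First I would pin down the admissible characters $\chi$ of $R$. Since a character is an algebra map $R \to k$ and $E, F$ generate a nilpotent-behaving part while $K$ is grouplike, I would check that $\chi$ is forced to kill $E$ and $F$ (from the relation $EF - FE = (K - K^{-1})/(q - q^{-1})$ one gets $\chi(K) - \chi(K)^{-1} = 0$, so $\chi(K) = \pm 1$), leaving exactly the two characters named in the excerpt: the counit $\varepsilon$ and the nontrivial $\chi$ with $\chi(K) = -1$. For each, I would compute $\sigma = \tau^l_\chi$ and $\tau^r_\chi$ explicitly on the generators $E, F, K$, then impose (\ref{strange}), i.e. $\sigma = \mathrm{ad}_l(y_+)\tau^r_\chi$, to determine which grouplikes $y_+ = K^r$ are permitted. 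This is precisely the computation $q^{2r} = q^{-2r} = -1$ in Case (i), which forces $q$ to be a primitive $4\widehat{\ell}$th root of $1$ and $r = \widehat{\ell}s$ with $s$ odd; in Case (ii) with $\chi = \varepsilon$, $\sigma$ is the identity and (\ref{strange}) collapses to $y_+ \in Z(R)$.

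Next I would identify $Z(R) \cap G(R)$ in each case, since both $z$ and (by the centrality requirement) $y_\pm$ must lie in it. Using the standard description of the centre of $U_q(\mathfrak{sl}_2)$ — the Casimir together with $K^r$ where $r$ depends on the order of $q$ — I would justify the equalities (\ref{centre1}) and its counit-case analogue $Z(R) \cap G(R) = \langle K^r \rangle$. Then the factorisation $z = y_+ y_-$ is parametrised by integers $m, s, p$ with the stated parities, and $\xi = \chi(y_+)$ is computed as $(-1)^{\widehat{\ell}}$ or $1$. The constraint on $h$ comes straight from Proposition \ref{options}: since $\xi = \pm 1$ here and $z$ may be nontrivial, $h$ must be a scalar multiple of $z - 1$, normalisable to $\lambda \in \{0,1\}$. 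Assembling $\sigma$, $h$, $\xi$ and the commutation relations (\ref{plus})–(\ref{minus}) with the chosen change of variables $E := \widehat{X}_+$, $F := \widehat{X}_- \cdot(\text{unit})$ then yields the three displayed presentations, and the coproducts follow immediately from (\ref{hatform}).

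The main obstacle is the centre computation: establishing that $Z(R) \cap G(R)$ is exactly $\langle K^{2\widehat{\ell}} \rangle$ (resp. $\langle K^r \rangle$) requires knowing the full centre of $U_q(\mathfrak{sl}_2)$ at roots of unity, which is delicate and characteristic-dependent, and I would cite the standard references rather than rederive it. The only genuinely subtle point beyond that is verifying that the parity conditions on $s$ and $p$ in Case (i) are both necessary (they come from $\chi(K^{\widehat{\ell}s}) = (-1)^{\widehat{\ell}s}$ having to equal $\chi(K^{\widehat{\ell}p})$, together with $s + p = 2m$) and sufficient to realise every admissible $(y_+, y_-)$; everything else is the routine bookkeeping of substituting the data into the construction half of Theorem \ref{newhopf}, which guarantees that each admissible tuple does produce a genuine ambiskew Hopf algebra, so completeness and existence are handled simultaneously.
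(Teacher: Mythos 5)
Your overall route is the paper's route: the paper's ``proof'' of this proposition is exactly the enumeration carried out in the preamble of \eqref{qsl} (determine the two characters, compute $\tau^l_\chi$ and $\tau^r_\chi$ on $E,F,K$, impose \eqref{strange} to pin down the admissible $y_+=K^r$, identify $Z(R)\cap G(R)$, and read off the presentations), and your first two steps reproduce this faithfully. One small remark: you overstate the difficulty of the centre computation. Since $G(R)=\langle K\rangle$, establishing \eqref{centre1} and its counit-case analogue needs nothing about the full centre of $U_q(\mathfrak{sl}_2)$ at roots of unity; one only checks when $K^j$ commutes with $E$ and $F$, i.e.\ when $q^{2j}=1$, which is elementary.

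The genuine gap is your treatment of $h$. You write that ``since $\xi=\pm1$ here and $z$ may be nontrivial, $h$ must be a scalar multiple of $z-1$'' by Proposition \ref{options}, but that proposition says the opposite: it forces $h\in k(z-1)$ precisely when $\xi\neq\pm1$, whereas when $\xi=\pm1$ it yields only that $\chi(h)=0$ or $z=1$, which places no constraint of the required form on $h$. In every case of the present proposition one has $\xi=\pm1$ ($\xi=1$ in (i),(ii) and $\xi=(-1)^{\widehat{\ell}}$ in (iii)), so Proposition \ref{options} gives you nothing here. The constraint on $h$ actually comes from Theorem \ref{newhopf} itself, which requires $h\in Z(R)\cap P_{1,z}(R)$; the substantive step your shortcut skips is the determination of this space of \emph{central skew-primitives} of $U_q(\mathfrak{sl}_2)$. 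That step is not vacuous: at a root of unity, with $r$ the order of $q^2$, the $q$-binomial identity \eqref{qbinomun} gives $\Delta(E^{r})=E^{r}\otimes 1+K^{r}\otimes E^{r}$, and $E^{r}$ (likewise $F^{r}K^{r}$) is central, so $Z(R)\cap P_{1,K^{r}}(R)$ strictly contains $k(1-K^{r})$. Thus identifying the admissible $h$ with $\lambda(1-z)$ requires a genuine argument about which central skew-primitives can occur (a point the paper itself passes over in silence with the phrase ``that is, $h=\lambda(1-z)$''), and your proof as written replaces that argument with a citation that does not apply.
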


Note that in part (iii) of the proposition, $y_{\pm}$ are not in the centre of $R$, in view of (\ref{centre1}). Thus the statement in
the Main Theorem that $y_{\pm} \in Z(G(R))$ cannot in general be improved.

\section{Coradical filtration}
\subsection{Hypotheses, notation and definitions} \label{coradhyp} The \emph{coradical filtration} $\{C_n : n \geq 0 \}$ of a
coalgebra $C$ is defined by taking $C_0$ to be the
\emph{coradical} of $C$, that is the sum of the simple
subcoalgebras of $C$, and setting, for all $n \geq 0$,
\[ C_{n+1} := \{ c \in C \colon \Delta(c) \in C_{n} \otimes C + C
\otimes C_0\}.\] The apparent left-right asymmetry in this
definition is not genuine, as can be easily deduced from
``associativity of the wedge'', \cite[(5.2.5)]{Mo}. Recall that a
\emph{coalgebra filtration} of a coalgebra $C$ is a family $\{ F_n
: n \geq 0 \}$ of subspaces of $C$ such that (i) $F_n \subseteq
F_{n+1}$, (ii) $C = \cup_{n \geq 0} F_n$ and (iii) $\Delta (F_n)
\subseteq \sum_{i=0}^n F_i \otimes F_{n-i}.$ We record below some
key facts we shall need; other basic properties can be found in
\cite[Section 5.2]{Mo}, for example.

\begin{lemma} \label{filtco} Let $C$ be a coalgebra with coradical filtration
$\{C_n : n \geq 0 \}.$
\begin{enumerate}
\item $\{C_n \}$ is a coalgebra filtration of $C$. \item If $\{
F_n :n \geq 0 \}$ is any coalgebra filtration of $C$ with $F_0
\subseteq C_0$, then $F_n \subseteq C_n$ for all $n \geq 0.$ \item
If $D$ is any subcoalgebra of $C$, then the $n$th term of the
coradical filtration of $D$ is $D \cap C_n$. \item Let $\{ A_i : i
\in \mathcal{I} \}$ be subcoalgebras of $C$ with $\sum_{i \in
\mathcal{I}} A_i = C.$ For each $i \in \mathcal{I}$ and each $n
\geq 0$ let $A_{i,n}$ be the $n$th term of the coradical
filtration of $A_i$. Then, for all $n \geq 0$ and for all $i \in
\mathcal{I},$
$$  C_n = \sum_{i \in \mathcal{I}} A_{i,n}. $$
\end{enumerate}
\end{lemma}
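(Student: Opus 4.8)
The proof rests on the wedge operation $U \wedge V := \{c \in C : \Delta(c) \in U \otimes C + C \otimes V\}$, the recursion $C_n = C_{n-1} \wedge C_0$ that the definition of the coradical filtration encodes, the associativity of the wedge cited after the definition, and duality with the convolution algebra $C^*$. Part (i) is exactly the assertion that the coradical filtration is a coalgebra filtration, and I would either quote \cite[Thm 5.2.2]{Mo} or reassemble it from three observations: that $C_0 \subseteq C_1$ (since $\Delta(C_0) \subseteq C_0 \otimes C_0$), whence $C_n \subseteq C_{n+1}$ by monotonicity of the wedge; that $\Delta(C_n) \subseteq \sum_{i=0}^{n} C_i \otimes C_{n-i}$, proved by induction on $n$ from the recursion and coassociativity; and that $\bigcup_n C_n = C$, which follows because every element of $C$ lies in a finite-dimensional subcoalgebra whose coradical filtration is finite and exhaustive.

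For (ii) I would induct on $n$, the case $n=0$ being the hypothesis $F_0 \subseteq C_0$. Given $F_{n-1} \subseteq C_{n-1}$ and $c \in F_n$, the coalgebra-filtration property gives $\Delta(c) \in \sum_{i=0}^{n} F_i \otimes F_{n-i}$; the summands with $i \leq n-1$ lie in $F_{n-1} \otimes C \subseteq C_{n-1} \otimes C$, while $F_n \otimes F_0 \subseteq C \otimes C_0$, so $\Delta(c) \in C_{n-1} \otimes C + C \otimes C_0$, i.e. $c \in C_{n-1} \wedge C_0 = C_n$. For (iii), the crucial input is the elementary identity $(D \otimes D) \cap (U \otimes C + C \otimes V) = (D \cap U) \otimes D + D \otimes (D \cap V)$, valid for subspaces and in particular for subcoalgebras, which shows that the wedge computed inside $D$ is the restriction $U \wedge_D V = D \cap (U \wedge_C V)$. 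Starting from $D_0 = D \cap C_0$ — which holds because the simple subcoalgebras of $D$ are simple subcoalgebras of $C$, while $D \cap C_0$ is a subcoalgebra of the cosemisimple $C_0$ and hence itself cosemisimple — an induction on $n$ using this identity delivers $D_n = D \cap C_n$.

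In (iv) the inclusion $\sum_{i} A_{i,n} \subseteq C_n$ is the quick part: the family $\{\sum_i A_{i,n}\}_n$ is a coalgebra filtration of $C$ whose $0$th term $\sum_i A_{i,0}$ is a sum of simple subcoalgebras of $C$, hence lies in $C_0$, so (ii) applies. For the reverse inclusion I would first reduce to a finite family, since any $c \in C_n$ lies in a subcoalgebra $D = A_{i_1} + \dots + A_{i_m}$ whose coradical filtration is $D \cap C_n$ by (iii); an induction on the number of summands then reduces everything to a single pair $C = A + B$. The base case $C_0 = A_0 + B_0$ I would obtain from the claim that a simple subcoalgebra $T$ of $A+B$ lies in $A$ or in $B$: dualizing, $A + B = C$ gives $A^{\perp} \cap B^{\perp} = 0$ in $C^*$, so $A^{\perp} B^{\perp} \subseteq A^{\perp} \cap B^{\perp} = 0$; passing to the simple algebra $C^*/T^{\perp}$, the images of the ideals $A^{\perp}$ and $B^{\perp}$ multiply to $0$, and a nonzero simple algebra admits no pair of nonzero ideals with zero product, so one image vanishes and $T \subseteq A$ or $T \subseteq B$, forcing $T \subseteq A_0$ or $T \subseteq B_0$.

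The main obstacle is propagating this base case up the filtration for the pair, i.e. establishing $C_n = A_n + B_n$ for all $n$. The coalgebra-filtration properties alone do not suffice: a coalgebra filtration agreeing with $\{C_n\}$ in degree $0$ need not equal it in higher degrees (a ``lagged'' filtration is a coalgebra filtration, so the converse of (ii) fails), and, crucially, coalgebra surjections do \emph{not} in general preserve the coradical filtration, so one cannot simply transport the filtration of the external direct sum $\bigoplus_i A_i$ along the canonical surjection onto $C$; the subcoalgebra structure of $A$ and $B$ must genuinely be used. I would attack this through the duality $C_n^{\perp} = \mathrm{Jac}(C^*)^{n+1}$ from \cite[\S 5.2]{Mo}: writing $J = \mathrm{Jac}(C^*)$, $\mathfrak{a} = A^{\perp}$, $\mathfrak{b} = B^{\perp}$, the claim becomes $(\mathfrak{a} + J^{n+1}) \cap (\mathfrak{b} + J^{n+1}) = J^{n+1}$ under the hypothesis $\mathfrak{a} \cap \mathfrak{b} = 0$, which I would prove using $\mathfrak{a}\mathfrak{b} = \mathfrak{b}\mathfrak{a} = 0$ together with the semiprimeness of $C^*/J$ to start the induction and an idempotent argument to climb. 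It is precisely the interaction of the radical-adic filtration with the two annihilator ideals that carries the real content of the lemma.
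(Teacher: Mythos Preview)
Your treatment of (i)--(iii) is correct and essentially agrees with the paper, which simply cites \cite[Theorem 5.2.2]{Mo}, remarks that (ii) is a straightforward induction on $n$, and cites \cite[Lemma 5.2.12]{Mo}; you have just unpacked those references, and your tensor-product identity for (iii) is valid.

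For (iv) you diverge from the paper, and here there is a genuine gap. The paper does not dualise at all: it invokes \cite[Proposition 5.2.9(2)]{Mo}, which identifies $C_n$ with the $n$th term of the socle (Loewy) series of $C$ as a left $C^{\ast}$-module, combines this with (iii) to get $A_{i,n}=A_i\cap C_n$, and then runs an induction on $n$ entirely inside the comodule category. The point is that the $A_i$ are $C^{\ast}$-submodules whose own socle series are, by the same cited fact applied to the subcoalgebra $A_i$, exactly the $A_{i,n}$; the inductive step is then a statement about socles of sums of submodules, and never leaves $C$.

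Your route instead passes to ideals of $C^{\ast}$ and rests on the identity $C_n^{\perp}=J^{n+1}$. This is where the argument breaks. What the duality actually gives is $C_n=(J^{n+1})^{\perp}$; taking perps back yields only $C_n^{\perp}=\overline{J^{n+1}}$, the weak-$\ast$ closure, and products of closed ideals in $C^{\ast}$ need not be closed, so in general $C_n^{\perp}\supsetneq J^{n+1}$. Your ideal-theoretic reformulation $(\mathfrak a+J^{n+1})\cap(\mathfrak b+J^{n+1})=J^{n+1}$ is therefore not the correct dual of $C_n=A_n+B_n$. On top of this, the promised ``idempotent argument to climb'' is never specified, and it is unclear what idempotents are available: $C^{\ast}/J$ is semiprimitive but not semisimple in general, so its ideals are not generated by central idempotents. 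Your base case (a simple subcoalgebra of $A+B$ lies in $A$ or in $B$) is correct and elegantly argued, but the paper's socle-series approach handles base case and inductive step uniformly and avoids the closure subtlety altogether.
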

\begin{proof} (i) This is \cite[Theorem 5.2.2]{Mo}.

(ii) This is a straightforward induction on $n$.

(iii) This is \cite[Lemma 5.2.12]{Mo}.

(iv) This follows from (iii) and by an induction on $n$, using the
fact \cite[Proposition 5.2.9(2)]{Mo} that $C_n$ is the $n$th term
in the socle series of $C$ as a left $C^{\ast}-$module. \qedhere
\end{proof}

We shall assume throughout $\S5$ that $R$ is a Hopf $k$-algebra,
 and $A = \langle R,
X_{\pm} \rangle$ is an ambiskew algebra such that the Hopf
structure on $R$ extends to $A$, so that (\ref{deltaxpm1}) holds.
In fact, as shown in Lemma \ref{relabel}, we may and will assume
throughout that the variables have been changed if necessary so
that \eqref{hatform} holds. The notation for the constituents of
$A$ will be as in the Main Theorem, except that we write $X_{\pm}$
for the ambiskew variables. Let $\{ R_a \colon a \geq 0\}$ denote
the coradical filtration of $R,$ $\{ A_t \colon t \geq 0\}$ the
coradical filtration of $A$.

We shall further assume throughout the rest of $\S 5$ that
\begin{equation}\label{char} k \textit{ has characteristic 0.}
\end{equation}
There is no doubt that this additional hypothesis could be removed
by refinements of the arguments below, along the lines of
\cite{B} and \cite{Mu}.

In the above set-up we have in particular that $\xi$ is a non-zero
element of $k$. When $\xi$ is a primitive $d$th root of unity for
some $d
> 1$, define, for each non-negative integer $m$, non-negative integers $q_m$ and $r_m$ by
\[ m := dq_m + r_m,\] with $0 \leq r_m < d.$ If $\xi = 1$ or $\xi$ is not a root of unity, set $q_m := m$ and $r_m := 0$.

Define a partial order $\prec$ on $\mathbb{Z}_{\geq 0}$; this is
dependent on $d$, but we suppress that in the notation. Set $p
\prec m$ if $q_p \leq q_m$ and $r_p \leq r_m$, for any $p,m \in
\mathbb{Z}_{\geq 0}$. Thus if $\xi = 1$ or $\xi$ is not a
non-trivial root of unity, then $p \prec m$ if and only if $p \leq
m$, for all non-negative integers $m$ and $p$. Set $\widehat{m} :=
q_m + r_m$. It is easy to check that, for all non-negative
integers $m, p$ with $p \prec m$,
\begin{equation} \label{precminus}
\widehat{m-p} = \widehat{m} - \widehat{p}.
\end{equation}

Let $0,1 \neq q \in k$. For any integer $n > 0$, let
\[ (n)_q := 1 + \ldots + q^{n-1} = \frac{q^n - 1}{q-1}.\]
The \emph{$q$-factorial} is defined by setting $(0)!_q := 1$ and
\[ (n)!_q := (1)_q (2)_q \cdots (n)_q = \frac{(q-1)(q^2-1) \cdots
(q^n-1)}{(q-1)^n}.\] Thus $(n)!_q$ is a polynomial in $q$ with
integer coefficients. For $0 \leq i \leq n$, we define the
\emph{Gaussian binomial coefficient} \index{Gaussian binomial
coefficient} or \emph{$q$-binomial coefficient} by
\[ \binom{n}{i}_q = \frac{(n)!_q}{(i)!_q (n-i)!_q}.\]
This is also a polynomial in $q$ with integer coefficients \cite[Proposition IV.2.1(a)]{Kas95}. There are analogues of the identities
for binomial coefficients \cite[Proposition IV.2.1(c)]{Kas95}:
\begin{equation} \label{qbinom} \binom{n}{i}_q = \binom{n-i}{i}_q +
q^{n-i} \binom{n-1}{i-1}_q = \binom{n-1}{i-1}_q + q^i
\binom{n-1}{i}_q.\end{equation}  If $q$ is a primitive $n$th root
of unity, then \begin{equation} \label{qbinomun} \binom{n}{i}_q =
0, \enspace \textrm{for all} \enspace 1 \leq i \leq n-1.
\end{equation}

\subsection{Comultiplication formulas} The $q$-binomial theorem \cite[I.6.1]{BG2} at once yields (i) of the following; (ii) follows
easily.
\begin{lemma} \label{coform} Keep the hypotheses and notation of (\ref{coradhyp}),
 and let $m$ and $n$ be non-negative integers.
 \begin{enumerate} \item \label{coform1}
\[ \Delta ({X_{\pm}}^m) = \sum_{j=0}^m
\binom{m}{j}_{{\xi}^{\pm 1}} {y_{\pm}}^{m-j}{X_{\pm}}^j \otimes
{X_{\pm}}^{m-j}.\] \item \label{coform2}
$$ \Delta({X_+}^m{X_-}^n) = \sum_{j=0}^m \sum_{k=0}^n
\binom{m}{j}_{\xi} \binom{n}{k}_{{\xi}^{-1}} \xi^{j(n-k)}
{y_+}^{m-j}{y_-}^{n-k} {X_+}^j{X_-}^k \otimes
{X_+}^{m-j}{X_-}^{n-k}.$$
 \item \label{newcoform1} There are non-zero scalars $\alpha_{p} \in k,$ for all $p \prec
 m,$ such that
 \[\Delta({X_{\pm}}^m) =
\sum_{0 \leq p \prec m}
  \alpha_{p} {y_{\pm}}^{m - p}{X_{\pm}}^{p} \otimes
{X_{\pm}}^{m - p}.\]
 \item There are non-zero scalars $\beta_{v,p} \in k,$ for all $v \prec
 m$ and $p \prec n,$ such that \[ \Delta({X_+}^m{X_-}^n) = \sum_{v \prec m} \sum_{p \prec n}
 \beta_{v,p} {y_{+}}^{m - v} {y_{-}}^{n - p}{X_{+}}^{v}{X_{-}}^{p} \otimes
 {X_{+}}^{m - v}{X_{-}}^{n - p}.\]  \label{newcoform2} \end{enumerate}  \end{lemma}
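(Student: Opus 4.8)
The plan is to prove all four parts of Lemma~\ref{coform} by leveraging the iterated skew structure and the $q$-binomial theorem. First I would establish part~\eqref{coform1} directly. Since $\Delta(X_{\pm}) = X_{\pm}\otimes 1 + y_{\pm}\otimes X_{\pm}$ with $y_{\pm}$ grouplike, the two tensor factors $X_{\pm}\otimes 1$ and $y_{\pm}\otimes X_{\pm}$ satisfy a $q$-commutation relation inside $A\otimes A$. Concretely, using \eqref{plus}, \eqref{plusplus} and the fact (Lemma~\ref{change}(ii)(e)) that $\sigma(y_{\pm}) = \chi(y_{\pm})y_{\pm} = \xi y_{\pm}$, one computes that $(y_{\pm}\otimes X_{\pm})(X_{\pm}\otimes 1) = \xi^{\pm 1}(X_{\pm}\otimes 1)(y_{\pm}\otimes X_{\pm})$. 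I would verify this $\xi^{\pm 1}$-commutation carefully, since it is the crucial input: it lets me apply the $q$-binomial theorem \cite[I.6.1]{BG2} with $q = \xi^{\pm 1}$ to expand $\Delta(X_{\pm})^m = (X_{\pm}\otimes 1 + y_{\pm}\otimes X_{\pm})^m$, yielding exactly the stated sum with coefficients $\binom{m}{j}_{\xi^{\pm 1}}$.

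For part~\eqref{coform2} I would again use that $\Delta$ is an algebra homomorphism, writing $\Delta(X_+^m X_-^n) = \Delta(X_+)^m\,\Delta(X_-)^n$ and substituting the two expansions from \eqref{coform1}. Multiplying the two resulting double sums in $A\otimes A$ requires moving the right-hand factors of the $X_+$-expansion past the left-hand factors of the $X_-$-expansion; the only nontrivial commutations are between $X_+^{m-j}$ (in the right tensor slot) and $y_-^{n-k}X_-^k$ (in the left tensor slot of the second factor), and between the $X_-$ and $y_-$ terms. Tracking these using \eqref{plus}, \eqref{plusplus} and $\sigma(y_-)=\xi y_-$ produces the single extra power $\xi^{j(n-k)}$ recorded in the statement; I would collect the grouplike elements $y_+^{m-j}y_-^{n-k}$ to the front of each tensor factor using centrality and grouplikeness. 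The bookkeeping here is routine but is the step most prone to sign or exponent errors, so it deserves care.

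Parts~\eqref{newcoform1} and \eqref{newcoform2} are then essentially a reindexing of \eqref{coform1} and \eqref{coform2} that incorporates the vanishing of Gaussian binomial coefficients at roots of unity. The key observation is that $\binom{m}{j}_{\xi}$ is nonzero precisely when $j \prec m$ in the partial order defined in~\eqref{coradhyp}: when $\xi$ is a primitive $d$th root of unity, repeated application of \eqref{qbinomun} via the Lucas-type factorisation of Gaussian binomials shows $\binom{m}{j}_{\xi} = \binom{q_m}{q_j}\binom{r_m}{r_j}_{\xi}$ up to a nonzero factor, which is nonzero exactly when $q_j\le q_m$ and $r_j\le r_m$, i.e.\ $j\prec m$; when $\xi=1$ or is not a root of unity, $\prec$ reduces to $\le$ and every coefficient with $j\le m$ is nonzero. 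I would therefore set $\alpha_p := \binom{m}{p}_{\xi^{\pm 1}}$ and $\beta_{v,p} := \binom{m}{v}_{\xi}\binom{n}{p}_{\xi^{-1}}\xi^{v(n-p)}$, restrict the sums to the surviving terms, and note these are nonzero by the above.

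The main obstacle I anticipate is not any single part in isolation but the $\xi^{\pm 1}$-commutation underpinning \eqref{coform1}: getting the $\sigma$-action on $y_{\pm}$ and the interaction of $X_{\pm}$ with itself across the two tensor factors exactly right, so that the effective quantum parameter is $\xi^{+1}$ for $X_+$ and $\xi^{-1}$ for $X_-$. Once that commutation is pinned down, the $q$-binomial theorem does the heavy lifting for~\eqref{coform1}, part~\eqref{coform2} is a controlled but lengthier multiplication of two such expansions, and the reindexing for \eqref{newcoform1}--\eqref{newcoform2} follows formally from the root-of-unity vanishing \eqref{qbinomun} together with the definition of $\prec$.
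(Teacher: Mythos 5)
Your proposal is correct, and for parts (i) and (ii) it coincides with the paper's (largely implicit) argument: the paper simply cites the $q$-binomial theorem \cite[I.6.1]{BG2} for (i) and says (ii) ``follows easily'', which is exactly the expansion-and-multiplication you describe. Where you genuinely diverge is in parts (iii) and (iv). The paper never invokes the $q$-Lucas theorem; instead it uses \eqref{qbinomun} to collapse $\Delta({X_{\pm}}^d)$ to ${y_{\pm}}^{d} \otimes {X_{\pm}}^d + {X_{\pm}}^d \otimes 1$, observes that ${y_{\pm}}^d$ commutes with $X_{\pm}$ (since $\sigma^{\pm 1}({y_{\pm}}^d) = \xi^{\pm d}{y_{\pm}}^d = {y_{\pm}}^d$), applies the \emph{ordinary} binomial theorem to $\Delta({X_{\pm}}^d)^{q_m}$, and multiplies by $\Delta({X_{\pm}}^{r_m})$ from (i), reading off $\alpha_p = \binom{q_m}{i}\binom{r_m}{j}_{\xi^{\pm 1}}$ for $p = di+j$; part (iv) is then deduced from (iii) the way (ii) comes from (i). You instead reindex (i) and (ii) directly, letting $q$-Lucas decide which Gaussian coefficients survive, and you identify the coefficients more cleanly as $\alpha_p = \binom{m}{p}_{\xi^{\pm 1}}$ (these agree with the paper's values, precisely by $q$-Lucas). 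The trade-off: your route is shorter, but it imports $q$-Lucas as an external fact, and it does not follow from ``repeated application of \eqref{qbinomun}'' as readily as you suggest --- the standard proof of $q$-Lucas is essentially the power-decomposition computation the paper performs, so the paper's version is self-contained at little extra cost. In both treatments it is characteristic $0$, hypothesis \eqref{char}, that keeps the ordinary binomial factors $\binom{q_m}{q_p}$ non-zero; and since $\prec$ depends only on $d$, your argument correctly applies verbatim to the parameter $\xi^{-1}$ appearing in the $X_-$ expansion.

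Two small slips to repair, neither fatal. First, the commutation in $A \otimes A$ goes the opposite way to what you wrote: from \eqref{plus}, \eqref{plusplus} and $\sigma(y_{\pm}) = \chi(y_{\pm})y_{\pm} = \xi y_{\pm}$ one gets $X_{\pm}y_{\pm} = \xi^{\pm 1} y_{\pm}X_{\pm}$, hence $(y_{\pm} \otimes X_{\pm})(X_{\pm} \otimes 1) = \xi^{\mp 1}(X_{\pm} \otimes 1)(y_{\pm} \otimes X_{\pm})$, not $\xi^{\pm 1}$. Carried through consistently, your stated direction would produce $\binom{m}{j}_{\xi^{\mp 1}}$ in part (i) rather than $\binom{m}{j}_{\xi^{\pm 1}}$; with the correct direction the $q$-binomial theorem yields the stated coefficients, the two possible bookkeepings being reconciled by the identities $\binom{m}{j}_{q^{-1}} = q^{-j(m-j)}\binom{m}{j}_q$ and $\binom{m}{j}_q = \binom{m}{m-j}_q$. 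Second, in part (ii) the reordering that produces the factor $\xi^{j(n-k)}$ happens entirely within the left-hand tensor slot, where ${X_+}^j$ must pass ${y_-}^{n-k}$; elements lying in different tensor slots never need to be moved past one another.
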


\begin{proof} (iii) When $\xi = 1$ or $\xi$ is not a non-trivial root of unity, this is just (i). Suppose now that $\xi$ is a primitive $d$th root of unity for $d >
1.$ Using \eqref{qbinomun}, (i) implies \[ \Delta({X_{\pm}}^d) =
{y_{\pm}}^{d} \otimes {X_{\pm}}^d + {X_{\pm}}^d \otimes 1.\]
Furthermore, by Theorem \ref{newhopf}(i), $\sigma^{\pm
1}({y_{\pm}}^d) = \xi^{\pm d}{y_{\pm}}^d = {y_{\pm}}^d$, and so
${y_{\pm}}^d$ commutes with $X_{\pm}$. Therefore, for all integers
$a \geq 0$,
\[ \Delta({X_{\pm}}^d)^a =  \sum_{i=0}^a \binom{a}{i} {y_{\pm}}^{d(a-i)}{X_{\pm}}^{di} \otimes
{X_{\pm}}^{d(a-i)},\] where $\binom{a}{i}$ denotes the ordinary binomial coefficient. Similarly, $X_{\pm}^d$ commutes with $y_{\pm}.$
Setting $m := dq_m + r_m$ and using the above equation together with (i) gives
\begin{align*} \Delta({X_{\pm}}^m) &= \Delta({X_{\pm}}^{d})^{q_m}\Delta({X_{\pm}}^{r_m})\\
&= \left( \sum_{i=0}^{q_m} \binom{q_m}{i}
{y_{\pm}}^{d(q_m-i)}{X_{\pm}}^{di} \otimes {X_{\pm}}^{d(q_m-i)}
\right)\\ & \qquad \qquad \cdot \left( \sum_{j=0}^{r_m}
\binom{{r_m}}{j}_{{\xi}^{\pm 1}} {y_{\pm}}^{{r_m}-j}{X_{\pm}}^j
\otimes {X_{\pm}}^{{r_m}-j} \right)\\ &= \sum_{i=0}^{q_m}
\sum_{j=0}^{r_m}
 \binom{q_m}{i} \binom{{r_m}}{j}_{{\xi}^{\pm 1}} {y_{\pm}}^{d(q_m-i) +{r_m}-j}{X_{\pm}}^{di + j} \otimes
{X_{\pm}}^{d(q_m-i) + {r_m}-j} \\  &= \sum_{i=0}^{q_m}
\sum_{j=0}^{r_m}
 \binom{q_m}{i} \binom{{r_m}}{j}_{{\xi}^{\pm 1}} {y_{\pm}}^{m - (di +j)}{X_{\pm}}^{di + j} \otimes
{X_{\pm}}^{m - (di+j)}\\ &= \sum_{0 \leq p \prec m} \alpha_{p}
{y_{\pm}}^{m - p}{X_{\pm}}^{p} \otimes {X_{\pm}}^{m - p},
\end{align*} where $\alpha_p := \binom{q_m}{i}
\binom{{r_m}}{j}_{{\xi}^{\pm 1}}$ for $p := di + j$, with $0 \leq i \leq q_m$ and $0 \leq j \leq r_m \leq d-1$. Our hypotheses now
ensure that $\alpha_{p} \neq 0$, for all $p \prec m$.

(iv) This follows easily from (iii). \qedhere
\end{proof}

\subsection{The filtration}
Define subspaces $\{F_t \colon t \geq 0\}$ of $A$ by
\[ F_t := \sum_{q + \widehat{m} + \widehat{n} \leq t} R_q
{X_{+}}^m{X_-}^n.\]
\begin{lemma} \label{fcof} Keep the hypotheses and notation of (\ref{coradhyp}),
 and let $t,m,n,\ell,v$ be non-negative integers.
 \begin{enumerate} \item $\{F_t : t \geq 0 \}$ as defined above is a coalgebra
filtration of $A$. \item If $v \prec \ell \prec m$, then $\ell - v
\prec m.$ \item Suppose that $\widehat{m} + \widehat{n} \leq t$,
and define $a := t - \widehat{m} - \widehat{n}.$ Then
$$ F_{t,(m,n)} := \sum_{\ell \prec m, w \prec n} R_a X_+ ^{\ell}
X_- ^{w} $$ is a subcoalgebra of $F_t$.
\end{enumerate}
\end{lemma}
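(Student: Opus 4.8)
The plan is to verify each of the three assertions in turn, with the comultiplication formulas of Lemma~\ref{coform} doing most of the work. For part (i), I would check the three defining properties of a coalgebra filtration directly. Inclusion $F_t \subseteq F_{t+1}$ and exhaustiveness $A = \bigcup_t F_t$ are immediate from the definition, since every basis element ${X_+}^m{X_-}^n$ with coefficient in $R_q$ lands in $F_t$ once $t \geq q + \widehat{m} + \widehat{n}$, and the $R_q$ exhaust $R$. The substantive condition is $\Delta(F_t) \subseteq \sum_{i=0}^t F_i \otimes F_{t-i}$. Here I would expand $\Delta(R_q {X_+}^m{X_-}^n)$ using that $\Delta$ is an algebra map together with Lemma~\ref{coform}\eqref{newcoform2}: writing $\Delta$ on the $R_q$-part via the coalgebra-filtration property of $\{R_a\}$ (Lemma~\ref{filtco}(i)) and on the ambiskew part via the stated formula, the typical tensor summand has left factor lying in $R_{a}{X_+}^{v}{X_-}^{p}$ and right factor in $R_{q-a}{X_+}^{m-v}{X_-}^{n-p}$, with $v \prec m$ and $p \prec n$. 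The identity \eqref{precminus}, namely $\widehat{m-v} = \widehat{m} - \widehat{v}$, is exactly what makes the weights add up correctly: the left factor sits in $F_i$ with $i = a + \widehat{v} + \widehat{p}$ and the right factor in $F_{t-i}$, since $(q-a) + \widehat{m-v} + \widehat{n-p} = (q-a) + (\widehat m - \widehat v) + (\widehat n - \widehat p) = t - i$.

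For part (ii), the claim $v \prec \ell \prec m \Rightarrow \ell - v \prec m$ is a purely combinatorial fact about the order $\prec$. I would split into the two cases defining $\prec$. When $\xi = 1$ or $\xi$ is not a nontrivial root of unity, $\prec$ is the usual order and the assertion reduces to $v \leq \ell \leq m \Rightarrow \ell - v \leq m$, which is trivial. When $\xi$ is a primitive $d$th root of unity, I would work with the digit decompositions $v = dq_v + r_v$, $\ell = dq_\ell + r_\ell$, $m = dq_m + r_m$ and use $v \prec \ell$ (so $q_v \leq q_\ell$, $r_v \leq r_\ell$) and $\ell \prec m$ (so $q_\ell \leq q_m$, $r_\ell \leq r_m$). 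The point is that $v \prec \ell$ already guarantees no borrowing occurs when forming $\ell - v$, so $\ell - v = d(q_\ell - q_v) + (r_\ell - r_v)$ is itself the correct base-$d$ decomposition; then $q_{\ell - v} = q_\ell - q_v \leq q_\ell \leq q_m$ and $r_{\ell - v} = r_\ell - r_v \leq r_\ell \leq r_m$, giving $\ell - v \prec m$.

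For part (iii), I must show $F_{t,(m,n)} = \sum_{\ell \prec m,\, w \prec n} R_a {X_+}^{\ell}{X_-}^{w}$, with $a = t - \widehat m - \widehat n$, is a subcoalgebra of $F_t$, i.e.\ that $\Delta$ maps it into $F_{t,(m,n)} \otimes F_{t,(m,n)}$. Applying Lemma~\ref{coform}\eqref{newcoform2} to a generator $R_a {X_+}^{\ell}{X_-}^{w}$ with $\ell \prec m$, $w \prec n$, each tensor summand has the form (group-like)$\,\cdot\, R_{a'}{X_+}^{v}{X_-}^{p} \otimes R_{a''}{X_+}^{\ell - v}{X_-}^{w - p}$ with $a' + a'' = a$, $v \prec \ell$, $p \prec w$. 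Both tensor legs must be checked to lie in $F_{t,(m,n)}$. The exponents on the left leg satisfy $v \prec \ell \prec m$ and $p \prec w \prec n$, so $v \prec m$, $p \prec n$; the grouplike and $R$-coefficient factors keep us in $R_{a} = R_a$ (as group-likes lie in $R_0$ and $a' \le a$, so $R_{a'} \subseteq R_a$). For the right leg the exponents are $\ell - v$ and $w - p$, and this is precisely where part (ii) is needed: $v \prec \ell \prec m$ forces $\ell - v \prec m$, and likewise $p \prec w \prec n$ forces $w - p \prec n$, so the right leg also lies in $F_{t,(m,n)}$. I expect the main obstacle to be bookkeeping rather than conceptual: one must be careful that all the $R$-coefficients genuinely stay inside $R_a$ (using $R_{a'} \subseteq R_a$ for $a' \leq a$ and that the $y_\pm$-powers are grouplike, hence in $R_0$), and that the group-like prefactors multiplying the ${X_\pm}$-monomials do not disturb membership in $F_{t,(m,n)}$; once part (ii) is in hand, the exponent conditions close up automatically.
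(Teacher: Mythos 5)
Your proposal is correct and follows essentially the same route as the paper: part (i) via Lemma \ref{coform}(iv) together with \eqref{precminus}, part (iii) via the same expansion with transitivity of $\prec$ handling the left tensorand and part (ii) handling the right, and your base-$d$ ``no borrowing'' argument simply fills in the details of part (ii), which the paper dismisses as routine. The only point stated loosely is that multiplying an element of $R_{a'}$ by powers of $y_{\pm}$ stays in $R_{a'}$ --- this holds not merely because grouplikes lie in $R_0$ (the coradical filtration need not be multiplicative) but because left multiplication by a grouplike is a coalgebra automorphism preserving the coradical filtration, the same fact the paper invokes without comment.
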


\begin{proof} (i) It is clear that $F_t \subseteq F_{t+1}$ for all $t \geq 0$
and that $A = \bigcup_{t \geq 0} F_t$, so it remains to prove that
$\Delta(F_t) \subseteq \sum_{i=0}^t F_i \otimes F_{t-i}$. Let
$r{X_+}^m {X_-}^n \in F_t$, so $r \in R_a$, with $a + \widehat{m}
+ \widehat{n} \leq t$. Lemma \ref{coform}(iv) shows that
$\Delta(r{X_+}^n{X_-}^m)$ is a sum of terms of the form
 \begin{align} \label{precdel}
\beta_{v,p} r_1{y_{+}}^{m - v} {y_{-}}^{n -
p}{X_{+}}^{v}{X_{-}}^{p} \otimes r_2{X_{+}}^{m - v}{X_{-}}^{n -
p},\end{align} where $v \prec n$, $p \prec m$, $\beta_{v,p} \in
k^*$. By definition of the coradical filtration of $R$, we have
$\Delta(r) \in \sum_{j=0}^a R_j \otimes R_{a-j}$, so without loss
of generality we can suppose that $r_1 \in R_j$ and $r_2 \in
R_{a-j}$ for some $j$, $0 \leq j \leq a$. Since $y_{\pm} \in
G(R)$, $r_1{y_{+}}^{m - v} {y_{-}}^{n - p} \in R_j$. Therefore,
\eqref{precdel} is contained in $F_u \otimes F_w$, where $u = j +
\widehat{v} + \widehat{p}$ and $w = a-j + \widehat{m-v} +
\widehat{n-p}$. It is enough to show that $u + w \leq t$. We have,
using \eqref{precminus},
$$ u + w = a + \widehat{v} + \widehat{p} + \widehat{m-v} +
\widehat{n-p} = a + \widehat{m} + \widehat{n} \leq t,$$ as
required.

(ii) This follows routinely from the definition of $\prec$.

(iii) As in the proof of (ii), if $r \in R_a$, $\ell \prec m$ and
$w \prec n$, so that $rX_+ ^{\ell}X_- ^w \in F_{t,(m,n)}$, then
$\Delta (rX_+ ^{\ell}X_- ^w )$ is a sum of terms of the form
\begin{equation} \label{gulp} \beta_{v,p}r_1 y_+ ^{\ell - v} y_-
^{w-p}X_+ ^v X_- ^p \otimes r_2 X_+ ^{\ell - v}X_- ^{w-p},
\end{equation} where $v \prec \ell, p \prec w, \beta_{v,p} \in
k^*, r_1 \in R_j, r_2 \in R_{a-j},$ for some $j$, $0 \leq j \leq
a.$ Since, in view of (ii), both tensorands in \eqref{gulp} are in
$F_{t,(m,n)}$, the result follows.
\end{proof}

\begin{thrm} \label{coradthrm} Keep the hypotheses and notation of (\ref{coradhyp}). Denote the coradical filtration of $A$ by $\{A_t : t \geq 0 \}.$
Then
\[ A_t = \sum_{q + \widehat{m} + \widehat{n} \leq t} R_q {X_+}^m{X_-}^n.\]
\end{thrm}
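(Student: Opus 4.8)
The plan is to prove the two inclusions $F_t\subseteq A_t$ and $A_t\subseteq F_t$ separately, where $F_t=\sum_{q+\widehat{m}+\widehat{n}\le t}R_q X_+^m X_-^n$ is the filtration of Lemma \ref{fcof}. Throughout I use that, since $\{X_+^a X_-^b\}$ is a free $R$-basis of $A$ and the coradical terms $R_q$ of $R$ are nested, membership in $F_t$ is coefficientwise: $\sum_{a,b} r_{a,b}X_+^a X_-^b\in F_t$ iff $r_{a,b}\in R_{t-\widehat{a}-\widehat{b}}$ for all $a,b$ (with the convention $R_s:=0$ for $s<0$).

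\emph{Forward inclusion.} By Lemma \ref{fcof}(i), $\{F_t\}$ is a coalgebra filtration of $A$. Because $\widehat{m}=0$ forces $m=0$, one has $F_0=R_0$; and viewing $R$ as a subcoalgebra of $A$, Lemma \ref{filtco}(iii) identifies $R_0=R\cap A_0\subseteq A_0$. Hence $F_0\subseteq A_0$, and Lemma \ref{filtco}(ii) yields $F_t\subseteq A_t$ for all $t$.

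\emph{Reverse inclusion: reduction.} For fixed $(m,n)$ the spaces $F_{t,(m,n)}$ of Lemma \ref{fcof}(iii) increase with $t$, so their union $D_{m,n}:=\bigcup_t F_{t,(m,n)}=\sum_{\ell\prec m,\,w\prec n}R\,X_+^\ell X_-^w$ is again a subcoalgebra of $A$. Clearly $A=\sum_{m,n}D_{m,n}$, so Lemma \ref{filtco}(iv) gives $A_t=\sum_{m,n}(D_{m,n})_t$, where $(D_{m,n})_t$ is the $t$-th coradical term of $D_{m,n}$. It therefore suffices to prove the single identity
\[ (D_{m,n})_t=\sum_{\ell\prec m,\,w\prec n}R_{t-\widehat{\ell}-\widehat{w}}\,X_+^\ell X_-^w ; \]
summing this over $(m,n)$ collapses the right-hand sides precisely to $F_t$ (take $(m,n)=(\ell,w)$), whence $A_t=F_t$.

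\emph{Proving the identity; main obstacle.} I would argue by induction on $(m,n)$ along the order $\prec$ (equivalently on $\widehat{m}+\widehat{n}$), the base case $D_{0,0}=R$ being the coradical filtration of $R$. In the inductive step one decomposes $D_{m,n}$ as the sum of the proper subcoalgebras $D_{m',n'}$, $(m',n')\prec(m,n)$ (for which the identity holds by induction), together with the top free module $R\,X_+^m X_-^n$. Reading $\Delta(r X_+^m X_-^n)$ off Lemma \ref{coform}(iv), every summand other than the leading term $\beta_{m,n}\sum r_1 X_+^m X_-^n\otimes r_2$ and the trailing term $\beta_{0,0}\sum r_1 y_+^m y_-^n\otimes r_2 X_+^m X_-^n$ has both tensorands supported on exponents strictly $\prec(m,n)$, by Lemma \ref{fcof}(ii). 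Projecting $\Delta(c)$ onto the coefficient of $X_+^m X_-^n$ in a chosen tensor factor then isolates the leading coefficient of an element $c\in(D_{m,n})_t$, and the defining condition $\Delta(c)\in(D_{m,n})_{t-1}\otimes D_{m,n}+D_{m,n}\otimes(D_{m,n})_0$ constrains its $R$-degree. The main obstacle is to make this degree bound sharp: a single projection only gives a drop of one unit, whereas the required drop is $\widehat{m}+\widehat{n}$. Recovering the full drop means feeding the intermediate coproduct terms back through the inductive hypothesis on the $D_{m',n'}$, cascading the loss one unit of $\widehat{\,\cdot\,}$ at a time; here the additivity \eqref{precminus} of $\widehat{\,\cdot\,}$ and Lemma \ref{fcof}(ii) carry the bookkeeping. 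A related point requiring care is the case $t=0$, i.e. that the coradical of each $D_{m,n}$ equals $R_0$: this is the assertion that no nontrivial combination involving $X_\pm$ is grouplike, which one settles by comparing leading terms in $\Delta(c)=c\otimes c$ via Lemma \ref{coform}(iv).
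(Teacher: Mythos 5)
Your forward inclusion $F_t\subseteq A_t$ is exactly the paper's argument, and your reduction of the reverse inclusion --- replacing the paper's minimal-counterexample induction on $t$ by the subcoalgebras $D_{m,n}=\bigcup_t F_{t,(m,n)}$ together with Lemma \ref{filtco}(iv) --- is a legitimate set-up. But the write-up stops exactly where the theorem lives. The identity $(D_{m,n})_t=\sum_{\ell\prec m,\,w\prec n}R_{t-\widehat{\ell}-\widehat{w}}X_+^{\ell}X_-^{w}$ is essentially equivalent to the theorem itself, and your inductive step for it is not carried out: as you concede, projecting $\Delta(c)$ onto the coefficient of $X_+^mX_-^n$ in the right-hand tensor factor isolates only $\sum (r_{m,n})_1\,y_+^m y_-^n\otimes (r_{m,n})_2$, whence (after translating by the grouplike $y_+^my_-^n$ and applying $\mathrm{id}\otimes\varepsilon$) merely $r_{m,n}\in R_{t-1}$ --- a drop of one, where a drop of $\widehat{m}+\widehat{n}$ is required. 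The ``cascading'' you invoke to recover the full drop is precisely the hard content of the proof, and it is left as a gesture. Worse, the gesture as described cannot work uniformly: when $\xi$ is a primitive $d$th root of unity with $d>2$ and $d\mid m$, one has $m-1=(q_m-1)d+(d-1)$, so $\widehat{m-1}=\widehat{m}+d-2>\widehat{m}$; lowering an exponent ``one unit at a time'' can \emph{raise} $\widehat{\,\cdot\,}$, not lower it. This non-monotonicity is exactly why the paper's proof splits into five cases and inspects a case-dependent coproduct term --- for instance $v=(q_m-1)d$, $p=n$ (a drop of a full block of $d$), rather than $v=m-1$, when $r_m=r_n=0$. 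Any correct completion of your scheme must make the same case-dependent choices; nothing in your bookkeeping via \eqref{precminus} and Lemma \ref{fcof}(ii) does so.

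There is also a flaw in your base case $t=0$. The coradical $(D_{m,n})_0$ is the sum of the \emph{simple subcoalgebras} of $D_{m,n}$, not the span of its grouplikes: $A$ is not known to be pointed at this stage (pointedness of $A$ is Corollary \ref{pointedcor}, a consequence of this theorem), so verifying that no element with nontrivial $X_{\pm}$-support satisfies $\Delta(c)=c\otimes c$ proves nothing about $(D_{m,n})_0$. The correct route is the standard fact that the coradical of a coalgebra is contained in the zeroth term of any coalgebra filtration, \cite[Lemma 5.3.4]{Mo}, applied to the filtration $\{F_s\cap D_{m,n}\}_{s\geq 0}$ of $D_{m,n}$, whose bottom term is $R_0$; this is the same fact the paper uses implicitly when it asserts $F_0=R_0=A_0$ at the start of its induction. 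In summary: the frame is sound and genuinely different in organization from the paper's (induction over exponent pairs versus induction on filtration degree), but the two points above --- above all the first --- are genuine gaps, not details.
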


\begin{proof} As before, set $F_t := \sum_{a + \widehat{m} + \widehat{n}} R_a X_+ ^m X_- ^n$.
We must show that $F_t = A_t$ for all $t \geq 0$.

Since $F_0 = R_0 = A_0,$ Lemmas \ref{fcof}(i) and \ref{filtco}(ii)
show that $F_t \subseteq A_t$ for all $t \geq 0.$

Now we show by induction on $t$ that $F_t = A_t$ for all $t \geq
0$. Suppose that $t \geq 0,$ that we have proved that $F_i = A_i$
for all $i \leq t$, but that \begin{equation} \label{false}
F_{t+1} \subsetneq A_{t + 1}.\end{equation} Since $\cup_{i \geq 0}
F_i = A$, there exists $t_0 \geq t + 2,$ $t_0$ minimal such that
\begin{equation} F_{t+1} \subsetneq F_{t_0} \cap A_{t+1}.
\label{alpha} \end{equation} For each ordered pair $(m,n)$ of
non-negative integers with $\widehat{m} + \widehat{n} \leq t_0,$
write $E_{t_0, (m,n)}$ for $F_{t_0,(m,n)}+F_{t_0 - 1}$. Observe
that, by choice of $t_0$,
\begin{equation} \label{beta} F_{t_0 - 1} \cap A_{t+1} = F_{t+1}.
\end{equation}
Each $E_{t_0,(m,n)}$ is a subcoalgebra of $F_{t_0}$, by Lemma
\ref{fcof}(iii), and
$$ F_{t_0} = \sum_{m,n}E_{t_0,(m,n)}.$$ By Lemma \ref{filtco}(iii), the $(t+1)$st term of the
coradical filtration of $E_{t_0,(m,n)}$ is $E_{t_0,(m,n)} \cap
A_{t+1}$, which contains $F_{t+1}$. By Lemma \ref{filtco}(iv) and
\eqref{alpha}, there exists at least one ordered pair $(m,n)$ with
\begin{align*} F_{t+1} \quad &\subsetneq \quad A_{t+1} \cap
E_{t_0,(m,n)} \\ &= \quad A_{t+1} \cap (F_{t_0, (m,n)} + F_{t_0 - 1}) \\
&= \quad (A_{t+1} \cap F_{t_0,(m,n)}) + (A_{t+1} \cap F_{t_0 - 1})
\end{align*} The second equality above follows from Lemmas
\ref{fcof}(iv) and \ref{filtco}(iv). Using \eqref{beta}, we deduce
that
\begin{equation} \label{delta} F_{t+1}  \subsetneq (A_{t+1} \cap F_{t_0,(m,n)}) +
F_{t+1} \subseteq E_{t_0,(m,n)} .\end{equation} Define the
non-negative integer $a$ by
\begin{equation} \label{eps} a + \widehat{m} + \widehat{n} = t_0.
\end{equation} Now \eqref{delta} shows that there exist $w, w' \in
A$ and $r \in R_a \setminus R_{a-1}$ such that
$$ w \quad = \quad rX_+ ^m X_- ^n + w', $$ where $w'$ is a sum of
terms $r'X_+ ^i X_- ^j$ where $r'$ has coradical degree $a'$ with
$$ a' + \widehat{i} +\widehat{j} \leq t_0 - 1. $$ By definition of
the coradical filtration and by the induction hypothesis,
$$ \Delta (w) \in A_t \otimes A + A \otimes A_0 = F_t \otimes A +
A \otimes F_0. $$ Now $\Delta (w)$ has the form
\begin{equation} \label{deltasump} \Delta (r) \sum_{v \prec m} \sum_{p \prec n}
\beta_{v,p} {y_{+}}^{m - v} {y_{-}}^{n - p}{X_{+}}^{v}{X_{-}}^{p}
\otimes {X_{+}}^{m - v}{X_{-}}^{n - p}, \end{equation} and we can
find a term $x \otimes z$ in the expansion of $\Delta (r)$ with $x
\in R_a \setminus R_{a-1},$ $z \neq 0.$ Consider now the
corresponding term \eqref{crime} in the expansion of $\Delta(w),$
noting first that it cannot cancel with any terms from the
expansion of $\Delta (w')$:
\begin{equation} \label{crime}
 \beta_{v,p} x{y_{+}}^{m - v} {y_{-}}^{n - p}{X_{+}}^{v}{X_{-}}^{p} \otimes
 z{X_{+}}^{m - v}{X_{-}}^{n - p} \end{equation} A summand of
 \eqref{deltasump} is in $A \otimes F_0$ if and only if $v = m$
 and $p = n;$ all other terms must be in $F_t \otimes A$. So, by
 definition of $F_t$, we have
 \begin{equation} a + \widehat{v} + \widehat{p} \leq t \label{eta}
 \end{equation} for all $v \prec m$ and $p \prec n$ except $(v,p)
 = (m,n)$. As in (\ref{coradhyp}), write $m = q_m d + r_m$ and $n =
 q_n d + r_n$ with $q_m,q_n \in \mathbb{Z}_{\geq 0},$ $0 \leq r_m,r_n \leq
 d-1.$ It is easy to check that we cannot have $m = n = 0;$ so we
 assume without loss of generality that $m \geq 1.$ There are now
 five cases to consider.

 (i) Suppose that $\xi = 1$ or $\xi$ is not a non-trivial root of
 unity. Thus $\prec$ is $\leq$. Consider the term of form
 \eqref{crime} given by $v = m-1,$ $p = n,$ which is in $F_t
 \otimes A;$ then \eqref{eta} yields
 $$ a + m - 1 + n \leq t, $$ which by \eqref{eps} implies
 $$ t_0 - 1 \leq t, $$
 contradicting $t_0 \geq t + 2.$

In the remaining four cases, $\xi$ is a primitive $d$th root of
unity for some $d
> 1$.

(ii) Suppose that $r_m = r_n = 0$. Therefore, $m = q_m d$ and $n =
q_n d$ for some $q_m \geq 1$ and $q_n \geq 0$. Consider the term
of form \eqref{crime} obtained by taking $v := (q_m -1)d$ and $p
:= n$, which is contained in $F_t \otimes A$. Then $\widehat{v} =
\widehat{m}-1$, so \eqref{eta} yields $$ a + \widehat{m} - 1 +
\widehat{n}  \leq t,$$ yielding thanks to \eqref{eps}
$$ t_0 - 1 \leq t,$$
again a contradiction.

(iii) Suppose that $r_m \geq 1$ and $r_n =0$. Consider the summand
of \eqref{deltasump} given by taking $v := m-1$ and $p := n$,
which is contained in $F_t \otimes A$. Then $\widehat{v} =
\widehat{m}-1$ and \eqref{eta} gives $a + \widehat{m} -1 +
\widehat{n} \leq t$, a contradiction.

Cases (iv) $r_m = 0, \, r_n \geq 1,$ and (v) $r_m, r_n \geq 1$ are
similar to (iii).

Thus in all cases the hypothesis \eqref{false} produces a
contradiction. So $F_{t+1} = A_{t+1}$, completing the proof of the
induction step, and with it, the theorem.
\end{proof}

For convenience we state the following immediate consequences of
the theorem:

\begin{cor} \label{pointedcor} Assume the hypotheses of Theorem
\ref{coradthrm}. \begin{enumerate} \item \quad $A_0 = R_0.$ In
particular, $A$ is pointed if and only if $R$ is pointed. \item \[
A_1 = \left\{
\begin{array}{cc} R_1 \oplus R_0{X_{\pm}}, & \xi = 1 \enspace
\textrm{or} \enspace \xi \enspace \textrm{not a root of 1};\\ R_1
\oplus R_0{X_{\pm}} \oplus R_0{X_{\pm}}^{d}, & \xi \enspace
\textrm{a primitive} \enspace d\textrm{th root of 1.}
\end{array} \right. \]
\end{enumerate}
\end{cor}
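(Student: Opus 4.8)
The plan is to read off both parts directly from the formula $A_t = \sum_{q + \widehat{m} + \widehat{n} \leq t} R_q {X_+}^m {X_-}^n$ established in Theorem \ref{coradthrm}, recalling the convention for $\widehat{m}$ from (\ref{coradhyp}). For part (i), I would simply set $t = 0$: the only way to satisfy $q + \widehat{m} + \widehat{n} \leq 0$ with all quantities non-negative is $q = \widehat{m} = \widehat{n} = 0$, and since $\widehat{m} = q_m + r_m = 0$ forces $m = 0$ (and likewise $n = 0$), we get $A_0 = R_0 {X_+}^0 {X_-}^0 = R_0$. The statement that $A$ is pointed if and only if $R$ is pointed then follows because a Hopf algebra is pointed precisely when its coradical is spanned by grouplikes, i.e.\ when $A_0$ is a group algebra; since $A_0 = R_0$ and $G(A) \supseteq G(R)$, and any grouplike of $A$ lies in $A_0 = R_0$ and hence is a grouplike of $R$, we have $G(A) = G(R)$ and the coradicals coincide.

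For part (ii), I would again specialise the formula, now at $t = 1$, so that $A_1 = \sum_{q + \widehat{m} + \widehat{n} \leq 1} R_q {X_+}^m {X_-}^n$, and enumerate the monomials $(q,m,n)$ satisfying the constraint. The term $q = 1, m = n = 0$ contributes $R_1$. The terms with $q = 0$ require $\widehat{m} + \widehat{n} \leq 1$, so exactly one of $\widehat{m}, \widehat{n}$ equals $1$ and the other is $0$; the real content is to identify which exponents $m$ give $\widehat{m} = 1$. Here the case split on $\xi$ enters: if $\xi = 1$ or $\xi$ is not a root of unity, then $q_m = m$ and $r_m = 0$, so $\widehat{m} = m$ and $\widehat{m} = 1$ forces $m = 1$, giving the single extra summand $R_0 X_{\pm}$. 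If $\xi$ is a primitive $d$th root of unity, then $\widehat{m} = q_m + r_m = 1$ has exactly the two solutions $(q_m, r_m) = (0,1)$, i.e.\ $m = 1$, and $(q_m, r_m) = (1,0)$, i.e.\ $m = d$, yielding the two summands $R_0 X_{\pm}$ and $R_0 {X_{\pm}}^d$.

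The only genuine subtlety, and the step I would treat most carefully, is justifying that these sums are \emph{direct}, i.e.\ that the displayed decompositions of $A_1$ use $\oplus$ legitimately. This follows from the freeness of $A$ as a left (or right) $R$-module on the basis $\{{X_+}^m {X_-}^n : m,n \geq 0\}$ recorded in (\ref{abasis}): distinct monomials ${X_+}^m {X_-}^n$ give $R$-submodules whose sum is direct, and within each the coefficient spaces $R_q$ are nested submodules of $R$, so $R_1$ and $R_0$ on distinct monomials intersect trivially. Thus the summands $R_1 = R_1 {X_+}^0 {X_-}^0$, $R_0 X_\pm$, and (in the root-of-unity case) $R_0 {X_\pm}^d$ meet only in $0$, and the direct sum notation is correct. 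Everything else is a matter of unwinding the definition of $\widehat{m}$, so no further machinery is needed.
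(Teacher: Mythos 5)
Your proposal is correct and matches the paper exactly: the paper states this corollary without proof as an ``immediate consequence'' of Theorem \ref{coradthrm}, and your argument --- specialising the formula $A_t = \sum_{q + \widehat{m} + \widehat{n} \leq t} R_q {X_+}^m{X_-}^n$ at $t=0$ and $t=1$, enumerating the exponents with $\widehat{m}=1$ according to whether $\xi$ is a primitive $d$th root of unity, and invoking the freeness of $A$ over $R$ on the basis \eqref{abasis} to justify the direct sums --- is precisely the unwinding the authors intend. Your care over the directness of the sums and the identification $G(A)=G(R)$ for the pointedness claim is a sound filling-in of details the paper leaves implicit.
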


\section{Properties of ambiskew Hopf algebras}
\subsection{Ring-theoretic properties}\label{ring}
The additional terminology used throughout $\S 6$ is standard and can be
checked as required in the references provided. However, to state
in Theorem \ref{ring}(vi) the conditions required for $A$ to
satisfy a polynomial identity we need some further notation, as
follows. Suppose that $k$ is algebraically closed of
characteristic 0, and that the automorphism $\sigma$ of $R$ has
finite order $n$. Let $\eta$ be a primitive $n$th root of unity in
$k$. Then
$$ R = \bigoplus_{i = 0}^{n-1} R_i, $$
where $R_i = \{ r \in R : \sigma (r) = \eta^i r \}.$ In particular, \begin{equation} \label{hdec} h = \sum_{i=0}^{n-1} h_i, \end{equation}
with $h_i \in R_i$ for all $i$.

\begin{thrm} Let $R$ be a $k$-algebra and let $A =  A(R, X_+, X_-, \sigma,
h, \xi)$ be an ambiskew algebra over $R$ as in \ref{ambiskew}.
 \begin{enumerate}
\item $A$ is noetherian if and only if $R$ is noetherian.
 \item $A$ is a domain if and only if $R$ is a domain.
 \item $A$ is semiprime left Goldie if and only if $R$ is semiprime left Goldie.
 \item If $R$ is prime then $A$ is prime.
 \item Suppose that every finite subset of $R$ is contained in a finite dimensional $\sigma-$invariant subspace of $R$. Then $A$ has finite Gel'fand-Kirillov dimension
if and only if $R$ does also. In this case
\begin{equation}\label{gkdim}  \mathrm{GK-dim}(A) = \mathrm{GK-dim}(R) + 2. \end{equation}
 \item Suppose that $k$ is algebraically closed of characteristic 0 and that $R$ is a commutative affine domain. Then the following conditions are equivalent:
\begin{enumerate} \item $A$ satisfies a polynomial identity.
\item $A$ is a finite module over its centre.
\item The following
conditions hold:
\begin{itemize} \item $\sigma_{|R}$ has finite
order $n$;
 \item $\xi$ has finite order $t$;
  \item if $t | n$, so
that $\xi = \eta^j$ for some $j$, $0 \leq j \leq n-1,$ then $h_j =
0$ in \eqref{hdec}.
\end{itemize}
\end{enumerate}

When the above equivalent conditions hold, the P.I. degree of $A$ is $2mn$, where $m = l.c.m.(n,t).$

\end{enumerate}
\end{thrm}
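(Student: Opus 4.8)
The plan is to base everything on the iterated Ore presentation $A\cong R[X_+;\sigma][X_-;\sigma^{-1},\delta]$ of (\S\ref{iterate}), together with one structural refinement of the basis \eqref{abasis}: the decomposition $A=\bigoplus_{m,n\ge 0}R\,X_+^mX_-^n$ is in fact a decomposition into $(R,R)$-sub-bimodules, because moving a single $r\in R$ past $X_+^mX_-^n$ invokes only the pure relations \eqref{plus}, \eqref{plusplus} and yields $X_+^mX_-^n r=\sigma^{m-n}(r)X_+^mX_-^n$, with no $\delta$-term. Thus $R$ is an $(R,R)$-bimodule retract of $A$, with projection $\pi\colon A\to R$. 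For (i), (ii) and (iv) the forward implications are the standard transfer results for a skew polynomial extension by an automorphism (Noetherianity; the domain property; and primeness of $T[x;\alpha,\delta]$ over a prime $T$, via \cite{MR}), applied in two steps. The reverse implications are then easy: for (i), $\pi(AI)=I$ for every left ideal $I\lhd R$, so an infinite strictly ascending chain in $R$ would lift to one in $A$; for (ii), $R$ is a subring of the domain $A$.

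For (iii) I would obtain the forward direction by localising at the set $C_R$ of regular elements of $R$. These stay regular in $A$ (as $\sigma^{k}(c)$ is regular whenever $c$ is, by the displayed bimodule formula), $C_R$ is $\sigma$-stable, and the localisation is $AC_R^{-1}\cong Q(R)[X_+;\sigma][X_-;\sigma^{-1},\delta]$ over the semisimple Artinian ring $Q(R)$; this Ore extension is semiprime Goldie and shares the classical quotient ring of $A$, so by Goldie's theorem $A$ is semiprime Goldie. The converse descends the two Goldie conditions through the retract: $\mathrm{r.ann}_R(S)=\mathrm{r.ann}_A(S)\cap R$ gives ACC on right annihilators, while $\sum_jI_jA=\bigoplus_{m,n}X_+^mX_-^n\,\sigma^{n-m}\!\big(\sum_jI_j\big)$ is direct whenever $\sum_jI_j$ is, so finite uniform dimension descends; hence $R$ is Goldie and its prime radical $N(R)$ is nilpotent. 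Since $\sigma$ preserves $N(R)$, if $N(R)\ne 0$ then the top nonzero power $I:=N(R)^{k-1}$ is a $\sigma$-stable square-zero ideal, whence $AI=IA$ and $(AI)^2=I^2A^2=0$, contradicting semiprimeness of $A$; so $R$ is semiprime. For (v) I would filter $A$ by total degree in $X_\pm$, giving $\gr A\cong R[\bar X_+;\sigma][\bar X_-;\sigma^{-1}]$; the local-finiteness hypothesis lets one choose finite-dimensional generating subspaces compatible with both extensions, so each raises the GK-dimension by exactly $1$, giving $\gk A=\gk R+2$ when finite, and finiteness descends trivially since $R\subseteq A$ forces $\gk R\le\gk A$.

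For (vi), since $R$ is a domain, $A$ is prime (by (iv)) and affine, so (a)$\Leftrightarrow$(b) is the standard fact that a prime affine PI algebra over a field is module-finite over its affine centre \cite{MR,BG2}. The substance is (b)$\Leftrightarrow$(c). Necessity of the order conditions I would read from $\gr A=R[\bar X_+;\sigma][\bar X_-;\sigma^{-1}]$, which is PI whenever $A$ is: the subring $R[\bar X_+;\sigma]$ over the commutative domain $R$ is PI only if $\sigma$ has finite order $n$, and the relation $\bar X_+\bar X_-=\xi\bar X_-\bar X_+$ generates a quantum plane that is PI only if $\xi$ is a root of unity, of finite order $t$. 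For sufficiency, using \eqref{hdec} and an induction from \eqref{minus} I would establish
\[X_+^sX_-=\xi^sX_-X_+^s+\Big(\sum_{i=0}^{s-1}\xi^i\sigma^{s-1-i}(h)\Big)X_+^{s-1}.\]
Taking $s=m:=\mathrm{lcm}(n,t)$, the bracketed sum is a geometric series in the $\xi\eta^{-l}$ and collapses to $0$ automatically when $t\nmid n$, and to a nonzero multiple of $h_j$ in the resonant case $\xi=\eta^j$; since $X_+^m$ also commutes with $R$ (as $n\mid m$), condition (c) is exactly what makes $X_+^m$, and symmetrically $X_-^m$, central. Then $A$ is a finite module over the affine central subalgebra generated by $X_+^m$, $X_-^m$ and the invariants $R^{\langle\sigma\rangle}$ (over which $R$ is finite, $k$ having characteristic $0$), giving (b).

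The crux, which I expect to be the main obstacle, is the necessity of $h_j=0$ together with the computation of the PI-degree. In the resonant case with $h_j\ne 0$ the displayed formula gives $[X_+^m,X_-]=m\eta^{j(m-1)}h_jX_+^{m-1}\ne 0$; working inside the division ring of fractions $D=\mathrm{Frac}(A)$ (an Ore domain, as $A$ is a Noetherian domain by (i)), where $X_+$ is invertible and $h_j$ is a unit of $F:=\mathrm{Frac}(R)$, one finds that $c:=m\eta^{j(m-1)}h_j$ commutes with both $X_+^m$ and $X_-X_+^{-(m-1)}$ and that $[\,c^{-1}X_+^m,\,X_-X_+^{-(m-1)}\,]=1$, so a copy of the first Weyl algebra embeds in $D$; as the Weyl algebra is not PI in characteristic $0$, neither is $A$. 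Finally the PI-degree is extracted from $D$, a central simple algebra over $Z(D)=\mathrm{Frac}(Z(A))$: pinning down $Z(D)$ and a $Z(D)$-basis of $D$ from the central powers $X_\pm^m$ and the crossed-product structure of $F/F^{\langle\sigma\rangle}$ reduces $[D:Z(D)]=(\text{PI-degree})^2$ to a finite count giving the stated value. Managing this last bookkeeping, and the resonance embedding, is where the real work lies.
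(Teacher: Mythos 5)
Much of your proposal is sound, and it is worth saying up front that you are attempting far more than the paper does: the paper's proof of this theorem is essentially a list of citations --- \cite[Theorem 1.2.9]{MR} for (i), (ii), (iv), Matczuk \cite[Theorem 2.6]{Ma} for (iii), Zhang \cite[Lemma 4.1]{Z} for (v), and the thesis \cite{M} for (vi). Your converse arguments for (i)--(iii) via the $(R,R)$-bimodule retract $\pi\colon A\to R$ are correct (the decomposition $A=\bigoplus_{m,n}RX_+^mX_-^n$ is indeed a bimodule decomposition, and your annihilator, uniform-dimension and square-zero-ideal arguments all work, granted Lanski's theorem that the prime radical of a Goldie ring is nilpotent). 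In (vi), your necessity argument for $h_j=0$ is a genuinely different route from the paper's source, which goes through J{\o}ndrup's criterion for Ore extensions over prime PI rings: I checked your commutation formula, the identity $[X_+^m,X_-]=m\eta^{j(m-1)}h_jX_+^{m-1}$, and the centrality of $c=m\eta^{j(m-1)}h_j$ relative to $X_+^m$ and $X_-X_+^{-(m-1)}$ (which hinges on $\eta^{jm}=1$), so the embedding of $A_1(k)$ into $\mathrm{Frac}(A)$ does rule out PI in the resonant case with $h_j\neq0$, in characteristic $0$. Your sufficiency argument (centrality of $X_\pm^m$ plus Noether finiteness of $R$ over $R^{\sigma}$) is the same as the paper's source.

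There are, however, genuine gaps. In (iii), your forward direction rests on the assertion that $Q(R)[X_+;\sigma][X_-;\sigma^{-1},\delta]$ is semiprime Goldie; but that assertion \emph{is} the theorem being proved, merely relocated to the Artinian base $Q(R)$ --- you give no argument for semiprimeness of an Ore extension of a semisimple Artinian ring, and the step ``shares the classical quotient ring of $A$'' also needs work ($C_R$ is an Ore set in $A$; regular elements of $A$ stay regular in $AC_R^{-1}$, which requires flatness of the localisation). The paper sidesteps all of this by citing Matczuk. In (vi), your claim that ``(a)$\Leftrightarrow$(b) is the standard fact that a prime affine PI algebra over a field is module-finite over its affine centre'' is not a standard fact and is false in general: finiteness over the centre would force the centre to be affine by the Artin--Tate lemma, and affine prime PI algebras with non-affine centres exist. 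Fortunately this claim is redundant, since your cycle (b)$\Rightarrow$(a)$\Rightarrow$(c)$\Rightarrow$(b) carries the logic. Most seriously, the PI-degree assertion --- which is part of the statement --- is never proved: you explicitly defer the computation of $[D:Z(D)]$ as ``bookkeeping where the real work lies.'' The paper's source obtains the degree immediately from J{\o}ndrup's degree theorem for Ore extensions (the PI degree multiplies by the order of the automorphism restricted to the centre), applied first to $R[X_+;\sigma]$ and then to $A$; a proof that omits this item is incomplete.

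A smaller point: in (v), passing to $\gr A$ requires knowing $\mathrm{GK}\textrm{-}\mathrm{dim}(A)=\mathrm{GK}\textrm{-}\mathrm{dim}(\gr A)$ for a filtration whose zeroth layer is all of $R$, hence infinite-dimensional; that equality is not automatic. It is cleaner to do what the paper does: apply Zhang's lemma directly to $R[X_+;\sigma]$ and then to $A=R[X_+;\sigma][X_-;\sigma^{-1},\delta]$, verifying local finiteness of the extended $\sigma$ and the $\delta$-stability of suitable finite-dimensional subspaces, since that lemma accommodates the derivation and makes the graded detour unnecessary.
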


\begin{proof} (i),(ii) \cite[Theorem 1.2.9(iv),(i)]{MR}.

(iii) \cite[Theorem 2.6]{Ma}.

(iv) \cite[Theorem 1.2.9(iii)]{MR}.

(v) Using the iterated skew structure of $A$ as
 explained in \ref{iterate}, set $B:= R[X_+; \sigma]$. Then the result
  follows by two applications of \cite[Lemma 4.1]{Z}. For
the second application, that is to pass from $B$ to $A$, one has
to check first that $\sigma$ is locally finite dimensional when
extended to $B$, and then that every finite set of elements of $B$
is contained in a finite dimensional subspace $V$ for which the
subalgebra $k\langle V \rangle$ of $B$ is $\delta - $stable. The
first statement is true because $\sigma$ is extended to $B$ by
setting $\sigma (X_+) = \xi X_+$, as discussed in \ref{iterate},
and the second is clear since $\delta (R) = 0$ and $\delta$ is
locally nilpotent on $B$.

(vi) This is proved in \cite[$\S$ 5.4.2]{M}. \end{proof}

\subsection{Remarks:}(i) The converse of (iv) is false, as shown by taking $R = k \oplus k$ and $\sigma((a,b) = (b,a)$, with (say) $h = 0$ and $\xi = 1$.

(ii) We have omitted the proof of (vi) because it is rather long
and technical, and its inclusion would take us too far away from
the main theme of this paper. The hypothesis there that $k$ has
characteristic 0 is necessary, as is illustrated by the first Weyl
algebra $A_1(k)$, whose centre is $k[X^p, Y^p]$ when $k$ has
characteristic $p > 0.$ In this case $h = h_0 = 1 \neq 0$. It is
somewhat surprising to us that the question of when an ambiskew
algebra satisfies a polynomial identity does not seem to have been
previously addressed in the literature.

\subsection{Homological properties} \label{homol} We recall from \cite{BZ} that a noetherian $k$-algebra $T$ with
a fixed augmentation $\varepsilon : T \longrightarrow k$ is \emph{AS-Gorenstein} if the left $T$-module $T$
has finite injective dimension $d$, with $\mathrm{Ext}^i_T(k,T) = \delta_{id} k$, and the same condition
holds on the right. Then $T$ is \emph{AS-regular} if, in addition, $\mathrm{gl.dim.}(T) < \infty.$

\begin{thrm} Let $R$ be a $k$-algebra and let $A =  A(R, X_+, X_-, \sigma,
h, \xi)$ be an ambiskew algebra over $R$ as in \ref{ambiskew}.
 \begin{enumerate}
\item If $R$ has finite global dimension ($\mathrm{gl.dim.}$) then so also does $A$, and in this case
\begin{equation}\label{gldim} \mathrm{gl.dim.}(R) + 1 \leq \mathrm{gl.dim}(A) \leq \mathrm{gl.dim.}(R) + 2. \end{equation}
\item If $R$ has finite injective dimension ($\mathrm{inj.dim.}$) then so does $A$, and in this case
\begin{equation}\label{injdim} \mathrm{inj.dim.}(R) + 1 \leq \mathrm{inj.dim}(A) \leq \mathrm{inj.dim.}(R) + 2. \end{equation}
\item If $R$ is Auslander-Gorenstein then so is $A$.
\item If $R$ is Auslander-regular then so is $A$.

 \end{enumerate}
  \end{thrm}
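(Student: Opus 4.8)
The plan is to exploit the iterated skew polynomial structure $A \cong R[X_+;\sigma][X_-;\sigma^{-1},\delta]$ established in \S\ref{iterate}, and to reduce each homological assertion to known behaviour of Ore extensions, applied twice. The essential point is that a skew polynomial extension $T[x;\alpha]$ (respectively $T[x;\alpha,\partial]$) by an automorphism $\alpha$ raises global and injective dimension by exactly one, and preserves the Auslander-Gorenstein and Auslander-regular conditions. First I would record the relevant citations: for global dimension of an Ore extension the standard reference is \cite[Theorem 7.5.3]{MR}, and for injective dimension one uses the change-of-rings spectral sequence together with the fact that $A$ is free, hence faithfully flat, over each intermediate ring. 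I would set $B := R[X_+;\sigma]$ and argue first that $B$ inherits the desired properties from $R$, then that $A$ inherits them from $B$.

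For the global and injective dimension bounds \eqref{gldim} and \eqref{injdim}, the upper bound of $+1$ at each of the two stages gives the overall $+2$; this is immediate from the Ore extension inequalities. The lower bound $\mathrm{gl.dim.}(R)+1 \leq \mathrm{gl.dim.}(A)$ requires a little more: since $R$ is a quotient of $A$ by the ideal generated by the normal-like elements $X_\pm$ (or since $R$ is a retract of $A$ via the augmentation), one has $\mathrm{gl.dim.}(R) \leq \mathrm{gl.dim.}(A)$, and the strict increase by at least one follows because a projective resolution over $A$ of an $R$-module cannot terminate one step earlier than over $R$ — concretely, if $M$ is an $R$-module with $\mathrm{pd}_R M = \mathrm{gl.dim.}(R)$, then $M \otimes_R A$, or the induced module, has projective dimension over $A$ strictly larger. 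The injective dimension lower bound is dual and follows by the same reasoning applied to $\mathrm{inj.dim.}$, using that $A$ is free over $R$ on both sides.

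For parts (iii) and (iv), the Auslander-Gorenstein and Auslander-regular conditions, I would invoke that these properties are stable under Ore extensions by an automorphism or by an $(\alpha,\beta)$-derivation. The cleanest route is through the filtered/graded technique: $A$ carries a filtration whose associated graded ring is an iterated \emph{twisted} polynomial extension of $R$, and the Auslander-Gorenstein and Auslander-regular properties lift from $\mathrm{gr}\,A$ to $A$ by \cite[Theorem III.2.3.5]{BG2} or the analogous standard lifting result; alternatively one applies the Ore-extension stability results directly at each stage. Either way, two applications (passing $R \to B \to A$) deliver the conclusion, with the derivation $\delta$ in the second stage causing no difficulty since passing to the associated graded removes it.

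The main obstacle I anticipate is the second stage — passing from $B$ to $A = B[X_-;\sigma^{-1},\delta]$ — because this is an Ore extension with a nontrivial derivation rather than a pure automorphism extension, and one must confirm that $\sigma^{-1}$ extends to an automorphism of $B$ and that $\delta$ is a genuine $(\sigma^{-1},\mathrm{id})$-derivation of $B$ compatible with all the stability hypotheses. The well-definedness was already asserted in \S\ref{iterate}, so the real work is checking that the cited stability theorems for global/injective dimension and for the Auslander conditions are stated (or can be adapted) to cover derivation Ore extensions and not merely skew-polynomial extensions by automorphisms; this is where I would be most careful to cite precisely or to reduce to the graded case where $\delta$ disappears.
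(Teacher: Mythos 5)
Your overall skeleton --- reducing to the two Ore extensions $R \subseteq B := R[X_+;\sigma] \subseteq A = B[X_-;\sigma^{-1},\delta]$ and quoting stability theorems at each stage --- is exactly the paper's, and your upper bounds and your treatment of parts (iii) and (iv) are workable (the paper invokes Ekstr\"om \cite[Theorem 4.2]{E}, which covers derivation extensions directly, in place of your filtered/graded lifting; both routes are legitimate). The genuine gap is in your lower bound $\mathrm{gl.dim.}(R)+1 \leq \mathrm{gl.dim.}(A)$. First, $R$ is \emph{not} a quotient of $A$ by the ideal generated by $X_{\pm}$: that ideal contains $h = X_+X_- - \xi X_-X_+$, so the quotient is at best $R/(h)$; for the Weyl algebra $A_1(k) = A(k,X_+,X_-,\id,1,1)$ the quotient is zero, and $A_1(k)$, being simple, admits no algebra map onto $R=k$ whatsoever. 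Nor is there an ``augmentation'' $A \to R$ here: Theorem \ref{homol} assumes no Hopf structure at all, only that $R$ is a $k$-algebra. Second, even granting such a map, your key assertion is backwards: since $A$ is free as a left $R$-module, a projective $R$-resolution of $M$ tensors up to a projective $A$-resolution of $M \otimes_R A$, so $\pd_A(M \otimes_R A) \leq \pd_R(M)$ --- never ``strictly larger''. The modules whose projective dimension jumps by one are those \emph{inflated} through a ring surjection onto $R$, and it is precisely the nonexistence of such a surjection $A \twoheadrightarrow R$ (when $h \neq 0$) that blocks your argument.

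The correct route, which is the paper's, exploits the asymmetry between the two stages rather than a direct passage from $R$ to $A$: the first extension $B = R[X_+;\sigma]$ is derivation-free, so $\mathrm{gl.dim.}(B) = \mathrm{gl.dim.}(R)+1$ \emph{exactly}, by \cite[Theorem 7.5.3(iii)]{MR} (here the surjection $B \twoheadrightarrow B/X_+B \cong R$ does exist, since $X_+B = BX_+$), while the second, derivation-carrying stage gives only $\mathrm{gl.dim.}(B) \leq \mathrm{gl.dim.}(A) \leq \mathrm{gl.dim.}(B)+1$, by \cite[Theorem 7.5.3(i)]{MR}. The entire lower bound comes from the exact equality at stage one. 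Note also that your opening claim, that a derivation extension $T[x;\alpha,\partial]$ raises dimension by exactly one, proves too much: it would force $\mathrm{gl.dim.}(A) = \mathrm{gl.dim.}(R)+2$ always, contradicting $A_1(k)$, which the paper points out attains the lower end of \eqref{gldim}. The same two-stage pattern with \cite[Theorem 4.2]{E} in place of \cite{MR} yields (ii) and (iii), and (iv) is immediate from (i) and (iii).
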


 \begin{proof}  (i) With the notation introduced in the proof of Theorem \ref{ring}(v), $\mathrm{gl.dim.}(B) = \mathrm{gl.dim.}(R) + 1$ by \cite[Theorem 7.5.3(iii)]{MR}. Now $A = B[X_-; \sigma^{-1}, \delta]$, so that $\mathrm{gl.dim.}(B) \leq \mathrm{gl.dim.}(A) \leq \mathrm{gl.dim.}(B) + 1$ by \cite[Theorem 7.5.3(i)]{MR}. Combining these yields the desired inequalities.

(ii), (iii) These are similar to (i), using \cite[Theorem 4.2]{E}.

(iv) Immediate from (i) and (iii). \end{proof}

\subsection{Remark:} Both inequalities in  \eqref{gldim} (and hence in \eqref{injdim}) can be attained.
Take $R = k$ and $A = k[X,Y]$ to get equality at the higher end,
and $R = k$ (of characteristic 0) and $A = A_1(k)$, the first Weyl
algebra, to get equality at the lower end.

\subsection{Hopf algebraic properties}\label{hoppy} Part (ii) of the result below should be read in the context
of the (currently open) question \cite[1.15]{BG1}, \cite[Question E]{Br} whether every
noetherian Hopf algebra is AS-Gorenstein.

\begin{thrm} Let $R$ be a $k$-algebra and let $A =  A(R, X_+, X_-, \sigma,
h, \xi)$ be an ambiskew algebra over $R$ as in \ref{ambiskew}. Suppose that $A$ admits a Hopf algebra structure extending a Hopf structure on $R$, and such that \eqref{deltaxpm1} holds.
\begin{enumerate}
\item   The second inequality in \eqref{gldim} is an equality.
\item $A$ is AS-Gorenstein if and only if $R$ is AS-Gorenstein. In this case, the second inequality in \eqref{injdim} is an equality.
\item $A$ is AS-regular if and only if $R$ is AS-regular.
\item $A$ has finite Gel'fand-Kirillov dimension
if and only if $R$ does also. In this case \eqref{gkdim} holds.
\end{enumerate}
\end{thrm}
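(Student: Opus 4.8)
The plan is to prove the four statements by leveraging the general ring-theoretic and homological results of Theorems~\ref{ring} and~\ref{homol}, using the extra Hopf structure to pin down the ambiguous upper/lower bounds. The key structural input is that any Hopf algebra with bijective antipode admitting an augmentation is \emph{self-injective up to a twist}; more precisely, for a noetherian Hopf algebra the left and right injective dimensions of the regular module coincide and equal the global dimension whenever the latter is finite. I would first record the observation that, since $A$ carries a Hopf structure extending that of $R$, both $R$ and $A$ have augmentations (their counits $\varepsilon$), and the antipodes are assumed bijective throughout (see \ref{notation}). This symmetry is exactly what forces the inequalities in \eqref{gldim} and \eqref{injdim} to collapse to their upper values.

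For part (i), the strategy is to show $\mathrm{gl.dim.}(A) = \mathrm{gl.dim.}(R) + 2$ by ruling out the lower bound $\mathrm{gl.dim.}(R)+1$ from \eqref{gldim}. The cleanest route is a Hopf-theoretic rigidity argument: for a Hopf algebra $H$ with bijective antipode, $\mathrm{gl.dim.}(H)$ equals $\mathrm{pd}_H(k)$, the projective dimension of the trivial module, by a standard result (every module can be reduced to $k$ via the antipode-twisted tensor structure, as in \cite{BZ} or \cite{Br}). Then I would compute $\mathrm{pd}_A(k)$ directly using the iterated skew-polynomial structure $A = R[X_+;\sigma][X_-;\sigma^{-1},\delta]$ of \ref{iterate}: each skew extension by a variable that is \emph{skew primitive} (so that $k$ remains a one-dimensional module on which the new variable acts as zero) raises the projective dimension of the trivial module by exactly one, giving $\mathrm{pd}_A(k) = \mathrm{pd}_R(k) + 2 = \mathrm{gl.dim.}(R) + 2$. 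The point is that skew-primitivity guarantees the Koszul-type resolution adds a genuine new syzygy at each step rather than merely bounding it.

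For part (ii), the forward direction (if $A$ is AS-Gorenstein then so is $R$) and its converse both follow from Theorem~\ref{homol}(ii),(iii) once we know the injective dimension bounds are sharp; the equality in \eqref{injdim} is obtained by the same trivial-module computation as in (i), now with $\mathrm{Ext}^i_A(k,A)$ concentrated in the top degree. The AS-condition $\mathrm{Ext}^i_A(k,A) = \delta_{id}\,k$ would be verified by transporting the AS-condition on $R$ through the two skew extensions, checking that each extension shifts the single nonzero $\mathrm{Ext}$-group up by one degree and preserves its one-dimensionality, using that $X_\pm$ are skew primitive so the relevant $(\sigma,\delta)$-twists act trivially on $k$. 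Part (iii) is then immediate: AS-regular means AS-Gorenstein with finite global dimension, so it follows by combining (i), (ii), and Theorem~\ref{homol}(i).

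For part (iv), the GK-dimension claim, the plan is to remove the local-finiteness hypothesis that Theorem~\ref{ring}(v) required, by using that the Hopf structure makes $\sigma = \tau^l_\chi$ a winding automorphism. The key step is that a winding automorphism of a Hopf algebra $R$ is automatically \emph{locally finite}: for any $r \in R$, the coproduct $\Delta(r) \in R\otimes R$ lies in a finite-dimensional subcoalgebra, and $\tau^l_\chi(r) = \sum r_1 \chi(r_2)$ then shows the $\sigma$-orbit of $r$ spans a finite-dimensional space. Hence the hypothesis of Theorem~\ref{ring}(v) is satisfied automatically in the Hopf setting, and \eqref{gkdim} follows directly. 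The main obstacle I anticipate is the precise verification in parts (i) and (ii) that skew-primitivity of the added variables forces each skew extension to increase $\mathrm{pd}(k)$ (respectively shift the top $\mathrm{Ext}$-group) by exactly one rather than possibly zero; this requires checking that the trivial module $k$ is \emph{not} induced up from a projective in a degenerate way, which is where the Hopf-algebraic constraint \eqref{hatform} — making $X_\pm$ act as zero on $k$ via $\varepsilon(X_\pm)=0$ — does the essential work.
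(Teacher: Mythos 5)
Your part (iv) is exactly the paper's argument (winding automorphism plus the fact that every finite subset of a coalgebra lies in a finite-dimensional subcoalgebra, which is $\tau^l_\chi$-stable), and your overall skeleton for (i)--(iii) -- reduce to the trivial module via \cite{LL} and work up the two skew extensions -- also matches the paper in outline. But the paper does \emph{not} compute $\pd_A(k)$ against arbitrary coefficient modules as you propose; it works with ring coefficients throughout, using \eqref{triv} together with two applications of Schofield's change-of-rings theorem \cite[Theorem 8]{S} to get the single isomorphism \eqref{ASjump}, $\Ext^{i+2}_A(k,A) \cong \Ext^i_R(k,R)$, from which (i), (ii) and (iii) all drop out at once. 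Your plan for (ii), ``transporting the AS-condition through the two skew extensions,'' is precisely a proposal to re-prove \eqref{ASjump} by hand; as written it is a plan, not a proof.

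The genuine gap is in your part (i), at the second extension. Your mechanism for showing that a skew extension raises $\pd(k)$ by exactly one needs, for the lower bound, a coefficient module witnessing nonvanishing of the top Ext group one degree up: concretely, from the short exact sequence $0 \to B\otimes_R k_\chi \to B\otimes_R k_\varepsilon \to k \to 0$ one takes an $R$-module $M$ with $\Ext^d_R(k_\chi,M)\neq 0$ ($d = \pd_R(k)$), inflates it to a $B$-module with $X_+$ acting as zero, and uses the long exact sequence. This works for $B = R[X_+;\sigma]$ because the relation $X_+r = \sigma(r)X_+$ is vacuous when $X_+$ acts as zero, so \emph{every} $R$-module inflates. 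It fails for $A = B[X_-;\sigma^{-1},\delta]$: since $\delta(X_+) = -\xi^{-1}h \neq 0$ in general, a $B$-module $N$ admits an $A$-module structure with $X_-$ acting as zero only if $\delta(B)N = 0$, i.e.\ essentially $hN = 0$ (the relation $X_-X_+ = \xi^{-1}(X_+X_- - h)$ forces $h$ to annihilate $N$). So the witness module for the top Ext group over $B$ need not extend to $A$ at all, and the skew-primitivity condition $\varepsilon(X_\pm)=0$ that you invoke only controls the trivial module itself, not the auxiliary coefficients. This is exactly the difficulty that ring coefficients avoid -- $A$ is tautologically an $A$-module -- and it is why the paper routes everything through \eqref{ASjump}. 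Separately, your opening assertion that a Hopf algebra with bijective antipode is automatically ``self-injective up to a twist'' with left and right injective dimensions equal to the global dimension is question-begging here: whether every noetherian Hopf algebra is AS-Gorenstein is precisely the open question \cite[Question E]{Br} in whose context part (ii) is stated, so it cannot be taken as a known structural input.
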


\begin{proof}(i) By \cite{LL}, the global dimension of a Hopf $k$-algebra $H$ is the projective
dimension of its trivial module $k$. It then follows by an easy application of the long exact sequence of cohomology that, when finite, \begin{equation} \label{triv} \mathrm{gl.dim.}(H) = \mathrm{max}\{ i : \mathrm{Ext}^i_H(k,H) \neq 0 \}. \end{equation} Since $k$ is manifestly a finitely generated module over $B$ and $R$, two applications of \cite[Theorem 8]{S} yield
\begin{equation}\label{ASjump} \mathrm{Ext}^{i+2}_A(k, A) \cong \mathrm{Ext}^{i}_R(k,R) \end{equation}
for all $i \geq 0$. (Note that there are typos in the statement of this theorem in \cite{S}.)
The result follows from \eqref{triv} and \eqref{ASjump}.

(ii) Immediate from \eqref{injdim} and \eqref{ASjump}.

(iii) Immediate from \eqref{gldim} and \eqref{ASjump}.

(iv) We have to show that the hypothesis on $\sigma$ needed for
Theorem \ref{ring}(v) is satisfied. This follows from the fact,
proved in the Main Theorem \ref{newhopf}, that $\sigma$ is a
winding automorphism, combined with the basic property of
coalgebras that every finite set of elements of a coalgebra is
contained in a finite dimensional subcoalgebra, \cite[Theorem 5.1.1]{Mo}.
\end{proof}

\section*{Acknowledgments} Some of the research described in this paper
formed part of the second author's PhD thesis at the University of
Glasgow \cite{M}, funded by the EPSRC.

\providecommand{\bysame}{\leavevmode\hbox
to3em{\hrulefill}\thinspace}
\providecommand{\MR}{\relax\ifhmode\unskip\space\fi MR }
\providecommand{\MRhref}[2]{%

\href{http://www.ams.org/mathscinet-getitem?mr=#1}{#2} }
\providecommand{\href}[2]{#2}

%\bibliographystyle{plain}
%\bibliography{E:/Bibliography/MonicaBibliography.bib}
%\bibliography{C:/Bibliography/Bibliography}
\end{document}